\documentclass{amsproc}

\usepackage{mathrsfs}

\usepackage{amsmath,amssymb,graphicx,epsfig,cite,rotating,setspace,amsthm,color}

\usepackage{amsmath,amssymb,amsthm,mathrsfs}
\usepackage{hyperref,cite}
\usepackage{geometry}
\usepackage{color,graphicx}

\newcommand{\beq}{\begin{equation}}
\newcommand{\eeq}{\end{equation}}
\newcommand{\bea}{\begin{eqnarray}}
\newcommand{\eea}{\end{eqnarray}}
\newtheorem{Def}{Definition}

\newcommand{\RM}{{\mathbb{R}}}
\newcommand{\CM}{{\mathbb{C}}}

\newcommand{\LM}{{\mathcal{L}}}
\newcommand{\HT}{H^1(\mathbb{R})}

\newcommand{\R}{{\mathbb R}}

\newcommand{\eq}[2]{\begin{equation}\begin{split}#1\end{split}\label{#2}\end{equation}}
\newcommand{\eqnn}[1]{\begin{equation}\begin{split}#1\end{split}\nonumber\end{equation}}

\numberwithin{equation}{section}

\newtheorem{theorem}{Theorem}[section]

\newtheorem{lemma}[theorem]{Lemma}

\theoremstyle{definition}

\theoremstyle{remark}

\title{Orbital Stability of Smooth Traveling Solitary Waves to the\\ Fornberg--Whitham Equation}
\author{Xijun Deng}
\address{Department of Mathematics, Hubei University of Automotive Technology,
Shiyan, Hubei 442002, P. R. China}
\email{xijundeng@yeah.net}

\author{St\'ephane Lafortune}
\address{Department of Mathematics, College of Charleston,
Charleston, SC 29401, USA}
\email{lafortunes@cofc.edu}

\author{Zhisu Liu}
\address{Center for Mathematical Sciences, School of Mathematics and Physics,
China University of Geosciences, Wuhan, Hubei 430074, P. R. China}
\email{liuzhisu@cug.edu.cn}

\date{}

\begin{document}
\maketitle

\begin{abstract}
The Fornberg--Whitham (FW) equation
was introduced by Fornberg and Whitham~\cite{FornbergWhitham1978} as a nonlocal model for unidirectional shallow water waves capable of capturing wave steepening and breaking. Despite its similarities with integrable shallow-water equations, the FW equation is not completely integrable. Nevertheless, the FW equation is part of the family of peakon-type models as it supports peaked traveling wave solutions. In this paper, we consider smooth solitary wave solutions to the FW equation. We use a variational approach to show that {{some}} are orbitally stable.
\end{abstract}

\section{Introduction}\label{sec:intro}
The Fornberg--Whitham (FW) equation
\begin{equation}\label{eq:FW}
 u_t - u_{txx} + u_x + u u_x = u u_{xxx} + 3 u_x u_{xx}
\end{equation}
was introduced by Fornberg and Whitham~\cite{FornbergWhitham1978} as a nonlocal model for unidirectional shallow water waves capable of capturing wave steepening and breaking. In contrast with classical weakly dispersive models such as KdV, the FW equation incorporates nonlocal dispersion through the Helmholtz operator $(1-\partial_x^2)^{-1}$, placing it within the broader class of Whitham-type equations derived to better approximate the full water-wave dispersion relation.

Applying $(1-\partial_x^2)^{-1}$ to \eqref{eq:FW} yields the equivalent nonlocal formulation
\begin{equation}\label{eq:FW-nonlocal}
 u_t + u u_x +(1-\partial_x^2)^{-1} u_x=0,
\end{equation}
where the convolution kernel is given explicitly by $\tfrac12 e^{-|x|}$. This representation highlights the balance between nonlinear steepening and nonlocal dispersion, and it has been used for the analysis of well-posedness, wave breaking, and weak solution concepts for the FW equation; see, for instance, Holmes and Thompson, H\"ormann, and Yang~\cite{HolmesThompson2014,Hormann2020,Yang2020}.

Despite its similarities with integrable shallow-water equations, the FW equation is not completely integrable. In particular, Ivanov~\cite{Ivanov2005} showed that within a broad class of nonlinear dispersive equations of Camassa--Holm type, the only completely integrable cases correspond to the Camassa--Holm and Degasperis--Procesi equations. Consequently, the FW equation does not admit a Lax pair or a bi-Hamiltonian structure, and it lacks the infinite hierarchy of conservation laws characteristic of integrable peakon equations.

Nevertheless, the FW equation is part of the family of peakon-type models. Like the Camassa--Holm and Degasperis--Procesi equations~\cite{CamassaHolm1993,DegasperisProcesi1999}, as well as the Novikov and modified Camassa--Holm equations~\cite{Fuchssteiner1996,Novikov2009}, the FW equation supports peaked traveling wave solutions. In fact, Fornberg and Whitham already identified a peaked solitary wave as a limiting profile of smooth traveling waves~\cite{FornbergWhitham1978}. Subsequent work has produced explicit peakons, cuspons, and composite waves, underscoring that FW shares many qualitative features with integrable peakon equations, even though it lacks their algebraic integrability.

The Cauchy problem for the Fornberg--Whitham (FW) equation has been studied extensively over the past decade, beginning with local well-posedness results in Sobolev spaces. In the periodic setting, Holmes~\cite{Holmes2016} established local existence, uniqueness, and continuous dependence on the initial data in $H^s$ for $s>\tfrac{3}{2}$. These results were subsequently extended by Holmes and Thompson~\cite{HolmesThompson2016}, who proved local well-posedness in Besov spaces {$B^s_{2,r}$ for $s>\tfrac{3}{2}, r\in(1,\infty)$ or $s\geq\frac{3}{2}, r=1$}, both on the torus and on the real line. More recently, Guo~\cite{Guo2023} investigated the FW equation in general Besov spaces and obtained local well-posedness in
\[
B^s_{p,r}(\mathbb{R}), \qquad s>1+\tfrac{1}{p}, \quad 1\le p\le\infty,\quad 1\le r<\infty.
\]
Furthermore, Qu, Wu, and Xiao~\cite{QuWuXiao2024} proved local well-posedness in the Besov space $B^1_{\infty,1}(\mathbb{R})$. A comprehensive synthesis of several of these developments---including a comparison of strong, mild, and weak solution concepts and their relation to wave-breaking mechanisms---is provided in the survey article by H\"ormann~\cite{Hormann2021}.

This behavior stands contrast to that of other peakon equations such as the Camassa--Holm (CH) equation and, more generally, the $b$-family \eqref{bf}. These equations are known to be locally well-posed in $H^s(\mathbb{R})$ for $s>\tfrac{3}{2}$~\cite{30,31,32,33}, and ill-posed in $H^s(\mathbb{R})$ for $s<\tfrac{3}{2}$~\cite{Him} (and for $1<b\le 3$, ill-posedness persists when $s=3/2$~\cite{GuoLiuMolinetYin2019}). In the specific case of the Camassa--Holm equation, local well-posedness holds in the Besov spaces $B^{1+\frac{1}{p}}_{p,1}(\mathbb{R})$ for $1\le p<\infty$ \cite{YeYinGuo2023}, while the equation is ill-posed in  $B^1_{\infty,1}(\mathbb{R})$~\cite{Guo2022,LiYuZhu}. In addition, norm inflation occurs in the critical Sobolev and Besov spaces
$
B^{1+\frac{1}{p}}_{p,r}(\mathbb{R}),  1\le p\le\infty, 1<r\le\infty,
$
as shown in~\cite{GuoLiuMolinetYin2019}.

Finally, we note that, unlike other shallow water models with peakon solutions such as the Camassa--Holm and Degasperis--Procesi equations and, more generally, the $b$-family, no general global existence result for smooth solutions of the Fornberg--Whitham equation is currently available.

A systematic classification of traveling wave solutions to \eqref{eq:FW} was carried out by Yin, Tian, and Fan~\cite{YinTianFan2010}, who identified parameter regimes supporting smooth periodic waves, smooth solitary waves, as well as various nonsmooth profiles including peaked and cusped waves. Their analysis shows that smooth solitary waves arise as limits of smooth periodic families and are separated from nonsmooth solutions by the singular set $\{u=c\}$, where the traveling-wave reduction degenerates.

Beyond existence and classification, stability properties of solitary waves have also been investigated. Using variational methods combined with a penalization argument and concentration--compactness, Zhang, Xu, and Li~\cite{ZhangXuLi2021} construct solitary waves for speeds $c>1$ as minimizers of a Lyapunov functional under a fixed $L^2$ constraint, and prove orbital stability of the resulting minimizing set. Their approach begins by postulating an appropriate energy functional, showing that the associated constrained minimization problem admits minimizers, and then establishing stability for all elements of this minimizing set. In contrast, in the present work we restrict attention to smooth traveling wave solutions, provide a complete characterization of such waves as critical points of a variational functional, and apply the Grillakis--Shatah--Strauss framework to show that all smooth solitary waves in this class are orbitally stable.

The purpose of the present work is to develop an orbital stability theory for \emph{smooth} traveling solitary waves of the Fornberg--Whitham equation. Our approach emphasizes an explicit traveling-wave reduction together with a variational characterization tailored to smooth profiles, placing the FW equation within the broader framework of stability theory for nonintegrable peakon-type models. The paper is organized as follows. In Section~\ref{sec:tw}, we obtain the smooth traveling solitary waves. In Section~\ref{sec:var}, we obtain their variational characterisation, and in Section~\ref{sec:stab}, we obtain the orbital stability.

\section{Traveling-wave reduction and smooth solitary waves}\label{sec:tw}

In this section, we first construct traveling wave solutions on a nonzero background. Although a symmetry of the FW equation allows any such solution to be transformed into one with zero background, working initially in the nonzero background setting serves two important purposes. First, it makes explicit the full family of traveling waves to which our stability criterion applies, without prematurely quotienting by the symmetry. Second, it reveals that a subset of these solutions naturally arise on negative backgrounds, a feature that would be obscured if one restricted attention to the zero background case from the outset.

\subsection{Traveling-wave solutions with nonzero background}

We seek traveling-wave solutions of \eqref{eq:FW} of the form
\begin{equation}\label{eq:tw-ansatz}
 u(x,t) = \phi(\xi), \qquad \xi = x-ct,
\end{equation}
where $c\in\mathbb{R}$ is the wave speed and $\phi$ is the profile. Substituting \eqref{eq:tw-ansatz} into \eqref{eq:FW} yields a third-order ODE for $\phi$ (see \cite[Eq.~(2.1)]{YinTianFan2010}):
\begin{equation}\label{eq:YTF-2.1}
 -c\,\phi' + c\,\phi''' + \phi' + \phi\,\phi' = \phi\,\phi''' + 3\,\phi'\,\phi''.
\end{equation}
Integrating \eqref{eq:YTF-2.1} once with respect to $\xi$ gives the second-order relation (cf. \cite[Eq.~(2.2)]{YinTianFan2010})
\begin{equation}\label{eq:YTF-2.2}
\left( \big(\phi-c\big)^2\right)'' = (\phi  -c)^2+2\phi +\alpha,
\end{equation}
where $\alpha\in\mathbb{R}$ is an integration constant and the identity is understood in the sense of distributions under minimal regularity assumptions (see \cite{YinTianFan2010} for details).

Following \cite{YinTianFan2010}, we multiply \eqref{eq:YTF-2.2} by $\big((\phi-c)^2\big)'$ and integrate once more. This yields the first-order energy relation
\eq{
2 \phi'^2 &+ F(\phi)=\alpha,
\\
 F(\phi) &= -\frac{(\phi-c)^2}{2} +\left( \frac{2\phi^2(3c-2\phi)/3+\beta}{(\phi-c)^2}\right),
}{eq:YTF-2.3}
for some additional constant $\beta\in\mathbb{R}$. The first-order invariant (\ref{eq:YTF-2.3})
represents the energy conservation for a Newtonian particle
with the mass $2$ and energy $\alpha$ under a force with
the potential energy $F$.

Smooth solitary wave solutions with profile $\phi \in C^{\infty}(\mathbb{R})$ satisfying $\phi(x) \to k$ as $|x| \to \infty$ correspond to the homoclinic
orbit from the equilibrium point $(\phi,\phi') = (k,0)$. Taking the limit as $|x|\rightarrow \infty$ in \eqref{eq:YTF-2.2} and \eqref{eq:YTF-2.3}
yields the relations:
\begin{equation}
\label{ag}
	\alpha = -(c-k)^2-2k, \qquad \beta=-(c-k)^4/2 -k(2c-k)^2/2-k^3/6.
\end{equation}

In order to ensure that the system \eqref{eq:YTF-2.3} has such a homoclinic orbit, one needs to verify that the ``potential function'' $F$ has a local maximum and a local minimum.  Critical points of $F$ in $(-\infty,c)$ are given by roots of the algebraic equation
\begin{equation}
-3(c-\phi)^4-3\phi(2c-\phi)^2-\phi^3 = 6\beta
\label{algebraic}
\end{equation}
 for $\phi \in (-\infty,c)$. { The mapping $M(\phi) : (-\infty,c) \mapsto \mathbb{R}$ defined by
$M(\phi) \equiv -3(c-\phi)^4-3\phi(2c-\phi)^2-\phi^3 $ has one critical point given by a global maximum  that
occurs at $\phi_0 \equiv  c-1<c$. Furthermore, $M(\phi)\rightarrow -\infty$ as $\phi\rightarrow -\infty$ and $M(c)=-4c^3$. Therefore, there exists only one critical point of $F$ for $\beta \in (-\infty,-2c^3/3)$ and $\beta = \mathfrak{b}$, two critical points of $F$  for $\beta \in (-2c^3/3,\mathfrak{b})$, and no critical points
of $F$  for $\beta \in (\mathfrak{b},\infty)$, where
\begin{equation*}
\mathfrak{b} \equiv  \frac{M(\phi_0)}{6}= \frac{1}{6}-\frac{2c^3}{3}.
\end{equation*}}
Homoclinic orbits with $\phi \in (-\infty,c)$ exist only if at least two roots of the algebraic equation \eqref{algebraic} exist in $(-\infty,c)$, which
happens if and only if $\beta \in (-2c^3/3,\mathfrak{b})$. The two roots
can be ordered as follows:
\begin{equation*}
\phi_1=k < c-1 < \phi_2 < c.
\end{equation*}
Implementing that $\beta>-2c^3/3$ using the expression \eqref{ag}, one finds $k>c-4/3$, and $\beta<\mathfrak{b}=1/6-2c^3/3$ using the expression \eqref{ag}, one finds $k<c-1$. Thus the free parameter $k$ satisfies the condition
\begin{equation}
\label{kin}
k\in\left(c-\frac43,\,c-1\right).
\end{equation}
 For fixed $c > 0$, the local maximum and minimum points of $F$ gives respectively the saddle point $(\phi_1,0)$ and the center point $(\phi_2,0)$ of the second-order equation (\ref{eq:YTF-2.2}). Thus, $\phi_1 \equiv k \in (c-4/3,c-1)$ is taken as the arbitrary parameter of the homoclinic orbit, satisfying \eqref{kin},
which specifies $\alpha$ and $\beta$ by the relation (\ref{ag}). Since $F(\phi) \to +\infty$ as $\phi \to c$ from the left, the homoclinic orbit belongs to the vertical stripe $\{ (\phi,\phi') : c-4/3 < \phi < c \}$ and represents the smooth solitary wave with the profile $\phi \in C^{\infty}(\mathbb{R})$ satisfying  $\phi(x) \to k$ as $|x| \to \infty$. By the translational invariance, the solitary wave profile satisfying $\phi'(0) = 0$ is uniquely defined. Figure \eqref{fig1} shows the graph of $F$ in the case $c=2$ and $k=5/6$.

The maximum value of $\phi$ is found by letting $\phi=$constant in  \eqref{eq:YTF-2.3}, thus solving
 \eqnn{\alpha-F(\phi)=0\implies (\phi-k)^2\left(-3 \phi^{2}+\left(12 c -6 k -8\right) \phi -12 c^{2}+12 k c -3 k^{2}+12 c -4 k\right)=0,}
where $F$ is given in \eqref{eq:YTF-2.3}. The question above has a double root at $\phi=k$ and, under the condition \eqref{kin}, it has a real root below and a root above $c$. We denote the root below $c$ by $\phi_{max}$ and it is given by
\eq{
\phi_{kmax}\equiv2 c -k -\frac{4}{3}-\frac{2 \sqrt{3(k-c+4/3)}}{3}.
}{phite2}

We have the following existence lemma.

\begin{figure}[h]
\begin{center}
	\includegraphics[width=0.6\textwidth]{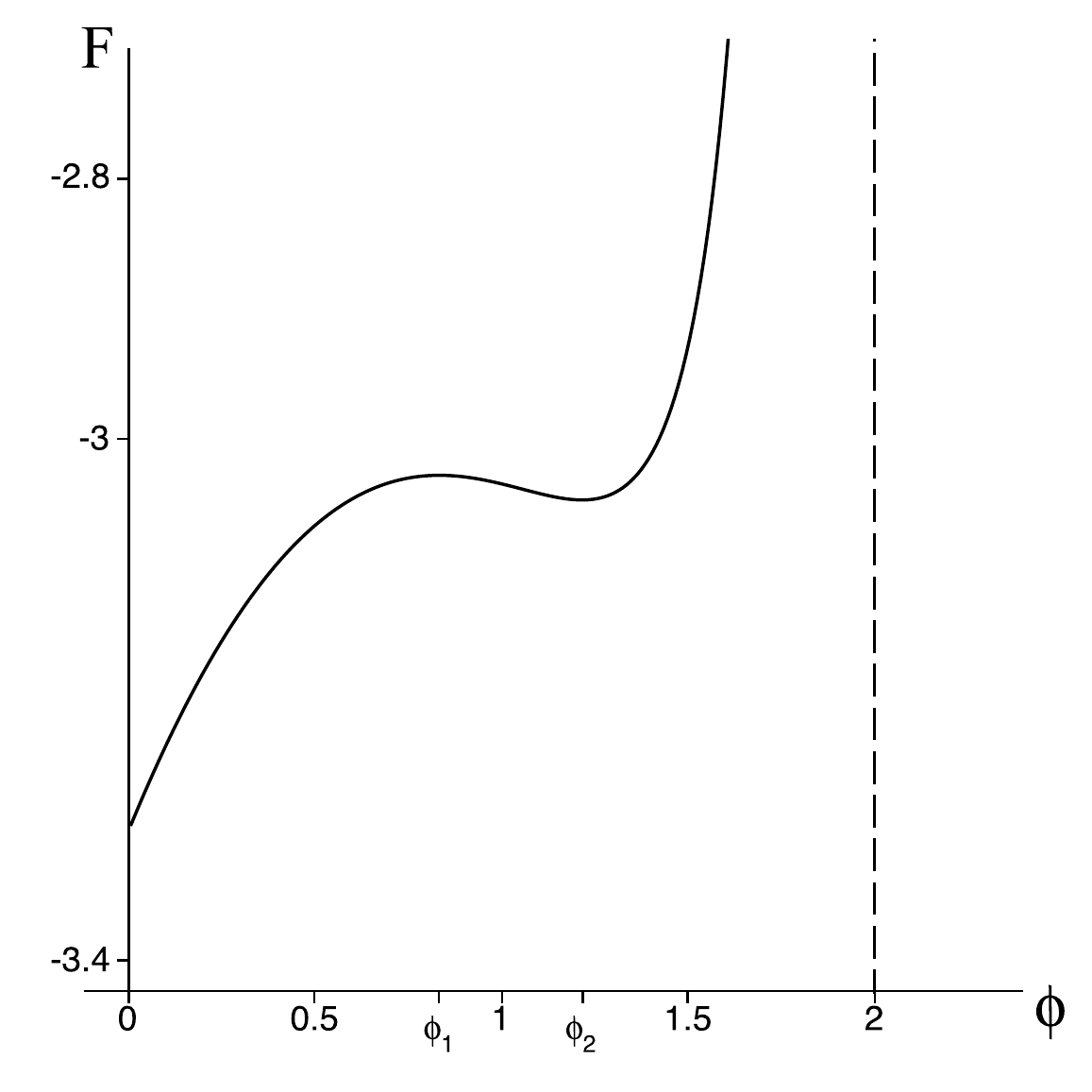}
	\end{center}\caption{Left: $F$ versus $\phi$ for $c=2$ and $k=5/6$, with $\beta=-5.2$ as given by \eqref{ag}.  The two critical points are labeled $\phi=\phi_1=k$ and $\phi=\phi_2$. \label{fig1}}
\end{figure}

\begin{lemma}
	\label{lem-travi}
	For fixed  $c > 0$, there exists a one-parameter
	family of smooth solitary waves with profile $\phi_k \in C^{\infty}(\mathbb{R})$ satisfying $\phi_k'(0) = 0$ and
	$\phi_k(x) \to k$ as $|x| \to \infty$ if and only if the arbitrary parameter $k$ belongs
	to the interval $(c-4/3,\,c-1)$. Moreover,
	\begin{equation*}
	c-4/3 <	\phi(x) < c,
	\end{equation*}
	and the family is smooth with respect to parameter $k$. Moreover,  $\sup_{x\in \mathbb{R}}\left(\phi_k\right)=\phi_{kmax}$, where $\phi_{kmax}$ is given by \eqref{phite2}.
\end{lemma}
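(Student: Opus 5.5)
The proof is a phase-plane analysis of the first integral \eqref{eq:YTF-2.3}, i.e.\ of the potential $F$ on the interval $(-\infty,c)$. Smoothness of a bounded solution requires $\phi\neq c$ everywhere: on $\{\phi=c\}$ the third-order equation \eqref{eq:YTF-2.1} degenerates, which is where peaked and cusped profiles live. So a smooth solitary wave must stay on one side of $c$. I will restrict to $\phi<c$, as the setup preceding the lemma does. On this side, \eqref{eq:YTF-2.2} can be written as a regular second-order ODE,
\[
\phi''=\frac{(\phi-c)^2+2\phi+\alpha-2\phi'^2}{2(\phi-c)},
\]
whose right-hand side is analytic wherever $\phi<c$.

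\textbf{Necessity.} A solution with $\phi\to k$ and $\phi'\to 0$ at $\pm\infty$ must be a homoclinic orbit to the equilibrium $(k,0)$. This forces \eqref{ag} for $\alpha$ and $\beta$. It also requires $(k,0)$ to be a hyperbolic saddle, i.e.\ a strict local maximum of $F$ with $F''(k)<0$. Finally, the level set $\{2\phi'^2+F=\alpha\}$ must close up, which requires a local minimum of $F$ on one side of $k$ inside $(-\infty,c)$. By the analysis of $M$ stated before the lemma, this means $\beta\in(-2c^3/3,\mathfrak b)$ with $k$ equal to the smaller root $\phi_1$. Substituting \eqref{ag} into these two inequalities gives \eqref{kin}, namely $k\in(c-4/3,c-1)$. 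The degenerate endpoints $k=c-1$ and $k=c-4/3$ are excluded because there $\beta=\mathfrak b$ or $\beta=-2c^3/3$, and a homoclinic orbit cannot exist.

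\textbf{Sufficiency.} Take $k$ satisfying \eqref{kin}. Then $F$ has a local maximum at $k$ and a local minimum at $\phi_2\in(c-1,c)$, and $F\to+\infty$ as $\phi\to c^-$. Hence the equation $F(\phi)=\alpha$ has exactly one root in $(k,c)$. The factorization displayed before \eqref{phite2} identifies this root as $\phi_{kmax}$; the quadratic factor is shown to have exactly one root below $c$ by evaluating it at $\phi=c$ and checking the sign.

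On $(k,\phi_{kmax})$ we have $F<\alpha$. Since $F'(k)=0$ and $F''(k)<0$, the value $\phi=k$ is reached only as $\xi\to\pm\infty$, with exponential rate. Since $F'(\phi_{kmax})\neq 0$, the orbit has a simple turning point at $\phi_{kmax}$. The profile is then defined implicitly by
\[
\xi=\int_{\phi}^{\phi_{kmax}}\frac{\sqrt2\,ds}{\sqrt{\alpha-F(s)}},
\]
which gives an even function with $\phi'(0)=0$ and $\sup_x\phi_k=\phi_{kmax}$. Because the orbit stays in $[k,\phi_{kmax}]\subset(c-4/3,c)$, the ODE is analytic along it, so $\phi_k\in C^\infty$. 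The bound $c-4/3<\phi<c$ follows from $\phi\ge k>c-4/3$ and $\phi_{kmax}<c$.

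\textbf{Smoothness in $k$.} I expect this step to be the main technical obstacle. I would use the stable manifold theorem with parameters: the saddle $(k,0)$ and its unstable manifold depend smoothly on $k$, since $\alpha$ and $\beta$ depend polynomially on $k$ through \eqref{ag}. The turning point is transversal to the axis $\phi'=0$ because $F'(\phi_{kmax})\neq0$. The implicit function theorem then makes the normalization $\phi'(0)=0$ smooth in $k$. Together these give smooth dependence of $\phi_k$ on $k$, uniformly with exponential decay of $\phi_k-k$.

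The remaining delicate point is the claimed equivalence. I must check that no smooth solitary wave exists with $\phi>c$ or with some other choice of equilibrium. I would rule these out by sign analysis of $F$ for $\phi>c$ when $c>0$, or else note that the lemma is stated within the region $\phi<c$ fixed by the preceding discussion.
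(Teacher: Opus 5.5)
Your proposal is correct and follows essentially the same route as the paper. Both are a phase-plane analysis of the potential $F$ on $(-\infty,c)$: the critical-point map $M$ forces $\beta\in(-2c^3/3,\mathfrak b)$ with $k$ the smaller (saddle) root, which gives $k\in(c-4/3,c-1)$, and $\phi_{kmax}$ is read off from the factorization of $\alpha-F$.

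You also add several steps the paper leaves implicit:
\begin{itemize}
\item the quadrature formula for the profile;
\item the sign check of the quadratic factor at $\phi=c$;
\item smoothness in $k$ via the parametrized unstable manifold;
\item the exclusion of profiles with $\phi>c$. Your plan for this works, because the same polynomial $M$ is strictly decreasing on $(c,\infty)$, so $F$ has at most one critical point there.
\end{itemize}
The paper itself simply restricts to $\phi<c$ and asserts smoothness in $k$ without proof.
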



 \subsection{Symmetry and traveling-wave solutions with zero background}
 \label{sec:2.2}

The FW equation \eqref{eq:FW} or, the equivalent form \eqref{eq:FW-nonlocal}, admit the following symmetry
\eq{
\text{if }u(x,t)\text{ is a solution then so is }u(x-kt,t)+k.
}{sym}
The symmetry above can be applied to the solutions of Lemma \ref{lem-travi} to obtain the following solutions with zero background, which we denote by $\phi_0$
\begin{lemma}
	\label{lem-trav}
	There exists a one-parameter
	family of smooth solitary waves with profile $\phi_0 \in C^{\infty}(\mathbb{R})$ satisfying $\phi_0'(0) = 0$ and
	$\phi_0(x) \to 0$ as $|x| \to \infty$ if and only if the arbitrary parameter $c$ belongs
	to the interval $(1,4/3)$. Moreover,
	\begin{equation*}
	c-4/3 <	\phi_0(x) < c.
	\end{equation*}
	 Moreover,  $\sup_{x\in \mathbb{R}}\left(\phi_0\right)=\phi_{0\text{max}}$, where $\phi_{0\text{max}}$ is given by \eqref{phite} below.
\end{lemma}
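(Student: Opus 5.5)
The plan is to deduce Lemma~\ref{lem-trav} from Lemma~\ref{lem-travi} through the symmetry \eqref{sym}, keeping track of how the speed and background change. I will not redo the phase-plane analysis.

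\textbf{Step 1: translate profiles.} Let $\phi_k$ be a solitary wave of Lemma~\ref{lem-travi}, so $u(x,t)=\phi_k(x-ct)$ solves \eqref{eq:FW}. By \eqref{sym}, read with shift $-k$, the function
\[
v(x,t)=u(x+kt,t)-k=\phi_k\bigl(x-(c-k)t\bigr)-k
\]
is also a solution. It is a traveling wave with speed $\tilde c = c-k$ and profile $\phi_0=\phi_k-k$, and this profile tends to $0$ at infinity. Smoothness and the normalization $\phi_0'(0)=0$ carry over directly.

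\textbf{Step 2: range of the speed.} The condition \eqref{kin}, $k\in(c-4/3,c-1)$, is equivalent to $\tilde c=c-k\in(1,4/3)$. This gives the existence part.

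For the converse, take any smooth solitary wave $\phi_0$ with speed $\tilde c$ and zero background. It is the case $k=0$, $c=\tilde c$ of the analysis behind Lemma~\ref{lem-travi}. Nothing there requires $k\neq0$: the relations \eqref{ag} and the root-counting for \eqref{algebraic} hold for any background. The one thing to check is that Lemma~\ref{lem-travi} assumes $c>0$, and this holds automatically because the criterion $0\in(\tilde c-4/3,\tilde c-1)$ forces $\tilde c>1$.

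I also want to rule out wave speeds $\tilde c\le 0$, where the singular set $\{u=\tilde c\}$ could lie on the other side of the background. That case needs one remark. I plan to handle it by noting that \eqref{sym} lets me shift any such wave to a positive speed: apply \eqref{sym} with a shift $k'<\tilde c-1$, which gives speed $\tilde c-k'>1>0$ and background $-k'$. Lemma~\ref{lem-travi} then applies to the shifted wave and forces $-k'\in(\tilde c-k'-4/3,\tilde c-k'-1)$, which is again exactly $\tilde c\in(1,4/3)$. This gives the ``only if'' direction.

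\textbf{Step 3: bounds and maximum.} From $k+\tilde c-4/3<\phi_k<k+\tilde c$ (written with $c=k+\tilde c$) and $\phi_0=\phi_k-k$ I get $\tilde c-4/3<\phi_0<\tilde c$. The supremum is $\phi_{kmax}-k$. Substituting $c=\tilde c+k$ into \eqref{phite2} gives
\[
\phi_{0\max}=2\tilde c-\tfrac43-\tfrac{2}{3}\sqrt{3\bigl(\tfrac43-\tilde c\bigr)},
\]
which is the formula to be recorded as \eqref{phite}.

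The main obstacle is the ``only if'' direction: I have to make sure the classification of Lemma~\ref{lem-travi} covers every zero-background wave, including possible speeds $\tilde c\le 0$. The shifting argument in Step~2 is how I intend to handle this, and the rest is bookkeeping.
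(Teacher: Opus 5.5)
Your proposal is correct and follows the paper's own route. The paper obtains Lemma~\ref{lem-trav} by applying the symmetry \eqref{sym} to the waves of Lemma~\ref{lem-travi} (equivalently, by specializing to $k=0$), and it gets \eqref{phite} by setting $k=0$ in \eqref{phite2}, which agrees with your substitution $c=\tilde c+k$ followed by subtracting $k$; your extra shift covering speeds $\tilde c\le 0$ in the ``only if'' direction, where the hypothesis $c>0$ of Lemma~\ref{lem-travi} would not otherwise apply, is a detail the paper leaves implicit and does not change the argument.
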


The maximum value of the solution for the solution with zero background is obtained by setting $k$ to zero in \eqref{phite2}
\eq{
\phi_{0\text{max}}\equiv2 c -\frac{4}{3}-\frac{2 \sqrt{4-3c}}{3}.
}{phite}

{{Note that when $c$ approaches $4/3$, the smooth solitary wave approaches the peakon solution \cite{FornbergWhitham1978,Hormann2018,ChenLiHuang2012,ZhouTian2010}. }}

In the rest of this paper, we show that some of the solution of Lemma \ref{lem-trav} is orbitally stable, which will imply the stability of the more general solution of Lemma \ref{lem-travi} since they are related by a point transformation.

\section{Hamiltonian structure}\label{sec:var}
\subsection{Hamiltonian formulation}\label{HF1}
It is convenient to write a Hamiltonian formulation that is compatible with the nonlocal representation \eqref{eq:FW-nonlocal}.
We observe that one may write the FW dynamics  in the Hamiltonian form on $\HT$
 as
\begin{equation}\label{HF}
 u_t = \mathcal{J}\,E'(u), \qquad \mathcal{J}=\partial_x,
\end{equation}
where the energy functional is
\begin{equation}
\label{Energy}
 E(u)\equiv\int_{\mathbb{R}}\Big(-\tfrac16 u^3-\tfrac12\,u\,(1-\partial_x^2)^{-1}u\Big)\,dx,
\end{equation}
with {the inverse} of the Helmotz operator  $1-\partial_x^2$ being given by
\eq{
(1-\partial_x^2)^{-1}v=G(x)\ast v,\;\;G(x)\equiv \frac{e^{-|x|}}{2}.
}{Hel}
Moreover, we have the mass-type quantity
\begin{equation}
\label{charge}
 Q(u)=\frac12\int_{\mathbb{R}}u^2\,dx
\end{equation}
is conserved along sufficiently regular solutions.

Using the Fourier transform, we compute
\[
\int_{\mathbb{R}} u\,(1-\partial_x^2)^{-1}u\,dx
=
\int_{\mathbb{R}} \frac{1}{1+|\xi|^2}\,|\hat{u}(\xi)|^2\,d\xi
\le
\int_{\mathbb{R}} |u(x)|^2\,dx,
\]
which shows that this term in the energy functional $E$ is well defined on $\HT$.
Furthermore, the functionals defined in \eqref{Energy} and \eqref{charge} are
well defined on $\HT$ and twice Fr\'echet differentiable, with first and second
variations given in \eqref{crit} and \eqref{sec}. This regularity follows from
the continuous embedding $\HT \hookrightarrow L^\infty(\mathbb{R})$, see
\cite[Theorem~8.8]{Brezis2011}, which implies that the cubic term appearing in
$E$ is bounded by a constant multiple of $\|u\|_{\HT}^3$.


\subsection{Variational characterization via constrained minimization}\label{Var}

 Consider the following combination of the above conserved quantities
\eq{
{\mathcal{H}}(u)\equiv {E}(u)+c\,{Q}(u).
}{L}
The traveling wave solution $u=\phi_0(x-ct)$ specified by Lemma \ref{lem-trav} is a critical point of  ${\mathcal{H}}$
{given above in the sense that for any $\varphi\in \HT${
\eq{
\left<{\mathcal{H}}'(\phi_0),\,\varphi\right>=\int_{\mathbb{R}}\big(-\frac{\phi_0^2}{2}-(1-\partial_x^2)^{-1}\phi_0+c\phi_0\big)\varphi\,dx=0.
}{crit}}}
It is indeed straightforward to verify that relation \eqref{crit} is equivalent to the traveling wave equation \eqref{eq:YTF-2.3} satisfied by $\phi_0$ in the case $k=0$.
Furthermore, the quantity $Q$ can be obtained from the general theory
of Grillakis, Shatah, and Strauss \cite{GSS1987} as the conserved functional associated to the space translation Hamiltonian symmetry
\eq{
T(s)u(x,t)\equiv u(x+s,t).
 }{SymD}
Using \cite[Equation (2.9)]{GSS1987}, one finds
\eqnn{
Q= \frac12\left<\mathcal{J}^{-1} T'(0) u,\,u\right>=\frac12\left<u,\,u\right>.
}
As such, we are able to apply the Grillakis-Shatah-Strauss theory directly. We briefly review this theory below.

\subsection{Overview of the theory}
\label{3.1}

We briefly recall the orbital stability framework developed in \cite{GSS1987}
(see also Chapter~5 of \cite{KP}).
Orbital stability refers to nonlinear stability modulo the action of
Hamiltonian symmetries.

Let a Hamiltonian system be posed on a real Hilbert space $\mathcal{X}$,
with Hamiltonian $E$, and assume it admits a one-parameter group of
Hamiltonian symmetries $T(s):\mathcal{X}\to\mathcal{X}$,
$s\in\mathbb{R}$, with infinitesimal generator $T'(0)$.
We assume $T(s)$ acts unitarily on $\mathcal{X}$.
The evolution equation is
\begin{equation}\label{hams}
u_t = \mathcal{J}\,\frac{\delta E}{\delta u},
\end{equation}
where $\mathcal{J}:\mathcal{X}^*\to\mathcal{X}$ is a skew-symmetric Hamiltonian operator.
Solutions are understood in a weak sense: $u:\mathbb{R}\to\mathcal{X}$ satisfies
\begin{equation}\label{weak}
{\frac{d}{dt}\langle \psi,u\rangle}
= -\left\langle \frac{\delta E}{\delta u},\mathcal{J}\psi\right\rangle,
\qquad \forall \psi\in D(\mathcal{J})\subset\mathcal{X}^*,
\end{equation}
where $\langle\cdot,\cdot\rangle$ denotes the $\mathcal{X}$--$\mathcal{X}^*$
duality pairing.
The natural isomorphism $\Lambda:\mathcal{X}\to\mathcal{X}^*$ is defined by
\eq{
\langle \Lambda u,v\rangle=(u,v),
}{LD}
 with $(\cdot,\cdot)$ the inner product on $\mathcal{X}$.

Of primary interest are special solutions corresponding to
\emph{bound states} or \emph{solitary waves}, given by
\begin{equation}\label{solform}
u(t)=T(\omega t)\phi_0,
\end{equation}
where {$\phi_0\in\mathcal{X}$ }depends smoothly on a parameter $\omega\in\mathbb{R}$
and is a critical point of the augmented energy
\begin{equation*}
{\mathcal{H}}=E-\omega Q.
\end{equation*}
The conserved quantity $Q$ (often interpreted as a charge) arises from the
symmetry via Noether's theorem \cite{Olver}; equivalently,
the Hamiltonian vector field associated with $Q$ generates the group,
\[
u_s=\mathcal{J}\,\frac{\delta Q}{\delta u}=T'(0)u.
\]
Both $E$ and $Q$ are invariant under $T(s)$.

The main hypotheses of \cite{GSS1987} may be summarized as follows:
\begin{itemize}
\item[(i)] \emph{Local well-posedness.}
For each $u_0\in\mathcal{X}$, there exists $t_0>0$ such that
(\ref{weak}) admits a solution on $[0,t_0)$ with $u(0)=u_0$,
and $E$ and $Q$ are conserved along the flow.
\item[(ii)] \emph{Existence of bound states.}
There exists a smooth curve $\omega\mapsto\phi_0\in\mathcal{X}$,
$\omega\in(\omega_1,\omega_2)$, such that
\[
\frac{\delta E}{\delta u}(\phi_0)-\omega\frac{\delta Q}{\delta u}(\phi_0)=0,
\]
{$\phi_0\in D((T'(0))^2)$, and $T'(0)\phi_0\neq0$}.
\item[(iii)] \emph{Spectral structure.}
For each $\omega\in(\omega_1,\omega_2)$, the second variation
$\frac{\delta^2 {\mathcal{H}}}{\partial u^2}(\phi_0)$ has exactly one simple negative eigenvalue,
kernel $\mathrm{span}\{T'(0)\phi_0\}$, and the remainder of the spectrum
is strictly positive and bounded away from zero.
\end{itemize}

The linearized operator
{\[
{\mathcal{L}}\equiv \frac{\delta^2 {\mathcal{H}}}{\delta u^2}(\phi_0)
=\frac{\delta^2 E}{\delta u^2}(\phi_0)
-\omega\frac{\delta^2 Q}{\delta u^2}(\phi_0)
\]}
is self-adjoint from $\mathcal{X}$ to $\mathcal{X}^*$.
Its spectrum consists of the set
\eq{\sigma({\mathcal{L}})=
\left\{
\lambda\in\mathbb{R}:
{\mathcal{L}}-\lambda \Lambda{\text{ fails to be invertible}}
\right\}.
}{specD}
Evaluating ${\mathcal{H}}$ at {$\phi_0$ }defines the scalar function $d(\omega)$.

The solution (\ref{solform}) traces the orbit
$\{T(\omega t)\phi_0:t\in\mathbb{R}\}$.
Orbital stability is defined as follows.

\begin{Def}\label{origdef}
The $\phi_0$-orbit is \emph{stable} if for every $\epsilon>0$ there exists
$\delta>0$ such that, whenever a solution $u(t)$ of (\ref{hams}) satisfies
$\|u(0)-\phi_0\|<\delta$, it is global in time and
\[
\sup_{t\ge0}\inf_{s\in\mathbb{R}}\|u(t)-T(s)\phi_0\|<\epsilon,
\]
{where $\|\cdot\|$ denotes the norm on $\mathcal{X}$ induced by the inner product.}
\end{Def}

The following fundamental result is due to \cite{GSS1987}.

\begin{theorem}\label{gss}
Under assumptions \emph{(i)--(iii)}, the $\phi_0$-orbit corresponding to
$\omega\in(\omega_1,\omega_2)$ is stable if $d''(\omega)>0$.
\end{theorem}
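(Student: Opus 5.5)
The plan is to follow the Lyapunov-functional argument of \cite{GSS1987}, reading $d''(\omega)>0$ as a coercivity statement for $\mathcal{L}$ on a codimension-two subspace. Write $\phi_\omega$ for the bound state at parameter $\omega$. Since $\phi_\omega$ is a critical point of $\mathcal{H}=E-\omega Q$, differentiating $d(\omega)=E(\phi_\omega)-\omega Q(\phi_\omega)$ gives $d'(\omega)=-Q(\phi_\omega)$. Differentiating the bound-state equation of hypothesis (ii) in $\omega$ gives $\mathcal{L}\psi=Q'(\phi_\omega)$, where $\psi\equiv\partial_\omega\phi_\omega$. Consequently
\[
\langle\mathcal{L}\psi,\psi\rangle=\langle Q'(\phi_\omega),\psi\rangle=\frac{d}{d\omega}Q(\phi_\omega)=-d''(\omega)<0 .
\]

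\textbf{Step 1 (coercivity).} Let $\chi$ be the eigenvector of the simple negative eigenvalue $-\lambda_0$ from hypothesis (iii), and let $P$ be the positive spectral subspace. The claim is that there is $\kappa>0$ with
\[
\langle\mathcal{L}y,y\rangle\ge\kappa\|y\|^2 \quad\text{whenever}\quad \langle Q'(\phi_\omega),y\rangle=0 \ \text{ and } \ (y,T'(0)\phi_\omega)=0 .
\]
\begin{itemize}
\item Decompose $\psi=a\chi+b\,T'(0)\phi_\omega+p$ with $p\in P$. From $\langle\mathcal{L}\psi,\psi\rangle<0$ one gets $\langle\mathcal{L}p,p\rangle<a^2\lambda_0$.
\item For such a $y$, decompose $y=\alpha\chi+q$ with $q\in P$. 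The constraint $\langle\mathcal{L}\psi,y\rangle=0$ reads $-\alpha a\lambda_0+\langle\mathcal{L}p,q\rangle=0$.
\item Cauchy--Schwarz for the positive form $\langle\mathcal{L}\cdot,\cdot\rangle$ on $P$ then gives $\alpha^2\lambda_0^2a^2\le\langle\mathcal{L}p,p\rangle\langle\mathcal{L}q,q\rangle<a^2\lambda_0\langle\mathcal{L}q,q\rangle$.
\item This yields $\langle\mathcal{L}y,y\rangle=-\alpha^2\lambda_0+\langle\mathcal{L}q,q\rangle>0$ for $y\neq0$.
\item A compactness-by-contradiction argument turns strict positivity into a uniform lower bound $\kappa\|y\|^2$. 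It uses that the spectrum on $P$ is bounded away from zero, and the orthogonality to $T'(0)\phi_\omega$ removes the kernel component.
\end{itemize}

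\textbf{Step 2 (modulation and energy expansion).} For $u$ in a tubular neighborhood of the orbit, the implicit function theorem gives a $C^1$ choice $s(u)$ minimizing $\|u-T(s)\phi_\omega\|$. This makes $v=T(-s(u))u-\phi_\omega$ orthogonal to $T'(0)\phi_\omega$; the theorem applies because $T'(0)\phi_\omega\neq0$, $\phi_\omega\in D((T'(0))^2)$ and $T$ is unitary.

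First suppose $Q(u)=Q(\phi_\omega)$. I split $v=\mu\psi+y$ with $y$ satisfying the $Q'$-constraint of Step 1. The constraint $Q(u)=Q(\phi_\omega)$ forces $\mu=O(\|v\|^2)$. Taylor expansion of $\mathcal{H}$ at its critical point, invariance of $E$ and $Q$ under $T$, and Step 1 then give
\[
E(u)-E(\phi_\omega)=\mathcal{H}(u)-\mathcal{H}(\phi_\omega)\ge \tfrac{\kappa}{2}\|v\|^2-C\|v\|^3 .
\]

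\textbf{Step 3 (general data and conclusion).} For general $u_0$ near $\phi_\omega$, note that $\frac{d}{d\omega}Q(\phi_\omega)=-d''(\omega)\neq0$. Hence there is a nearby $\omega'$ with $Q(\phi_{\omega'})=Q(u_0)$, and $|\omega'-\omega|\lesssim\|u_0-\phi_\omega\|$. I apply Step 2 at $\omega'$, whose coercivity constant is uniform for $\omega'$ near $\omega$.

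By the conservation of $E$ and $Q$ in hypothesis (i), $E(u(t))-E(\phi_{\omega'})=E(u_0)-E(\phi_{\omega'})$ is $O(\delta)$. Step 2 therefore bounds $\inf_s\|u(t)-T(s)\phi_{\omega'}\|$ by $O(\delta^{1/2})$ as long as $u(t)$ stays in the tube. A continuity argument shows it never leaves the tube, which closes the bound. The a priori bound also extends the local solution globally, and the triangle inequality returns from $\phi_{\omega'}$ to $\phi_\omega$.

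The main obstacle is Step 1, specifically upgrading strict positivity to uniform coercivity. This is exactly where the sign $d''>0$ is spent, and it needs care with the $\mathcal{X}$--$\mathcal{X}^*$ pairing and the isomorphism $\Lambda$.
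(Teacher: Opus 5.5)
The paper gives no proof of this theorem; it quotes it from \cite{GSS1987}, so the relevant comparison is with the GSS argument.

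Your Steps 1 and 2 reproduce its two lemmas: the identity $\mathcal{L}\psi=Q'(\phi_\omega)$ with $\langle\mathcal{L}\psi,\psi\rangle=-d''(\omega)<0$; strict positivity of $\langle\mathcal{L}y,y\rangle$ on $\{\langle Q'(\phi_\omega),y\rangle=0,\ (y,T'(0)\phi_\omega)=0\}$ via Cauchy--Schwarz on $P$; the weak-limit upgrade to a uniform $\kappa$; and the bound $E(u)-E(\phi_\omega)\gtrsim\|v\|^2$ on the level set $Q(u)=Q(\phi_\omega)$. There is one slip in Step 2. The vector $y=v-\mu\psi$ is orthogonal to $T'(0)\phi_\omega$ only if $\psi$ is, and you did not assume this (you allowed $b\neq0$ in Step 1). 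It is harmless. Remove the component $\nu\,T'(0)\phi_\omega$ of $y$, with $\nu=O(\mu)=O(\|v\|^2)$. This changes neither $\langle\mathcal{L}y,y\rangle$, since that component lies in the kernel, nor the $Q'$-constraint, since $\langle Q'(\phi_\omega),T'(0)\phi_\omega\rangle=0$ by invariance of $Q$. Also, if $E,Q$ are only $C^2$, the Taylor remainder is $o(\|v\|^2)$ rather than $C\|v\|^3$; this still suffices.

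The step that does not go through as written is the endgame in Step 3. Because $\omega'$ is determined by $u_0$, your $\delta$ is independent of $u_0$ only if three constants are uniform for $\omega'$ near $\omega$: the coercivity constant, the radius of the modulation tube, and the remainder constant of Step 2. You simply assert this, and it is not automatic from (i)--(iii), since (iii) is a statement at each fixed $\omega$. Uniform coercivity can be recovered by rerunning your contradiction argument along $\omega_j\to\omega$, using norm continuity of $\omega\mapsto\mathcal{L}_\omega$ and perturbation of its isolated eigenvalues and kernel. The implicit-function-theorem tube at $\omega'$, however, is controlled by $(T'(0))^2\phi_{\omega'}$, while (ii) gives $\phi_\omega\in D((T'(0))^2)$ only pointwise in $\omega$. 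You would have to modulate relative to the fixed $\phi_\omega$ and track the drift of $\ker\mathcal{L}_{\omega'}$.

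GSS avoid all of this by never leaving the single value $\omega$. Assuming instability, take $u_n(0)\to\phi_\omega$ and the first time $t_n$ with $\inf_s\|u_n(t_n)-T(s)\phi_\omega\|=\epsilon$. Conservation gives $E(u_n(t_n))\to E(\phi_\omega)$ and $Q(u_n(t_n))\to Q(\phi_\omega)$. Since $Q'(\phi_\omega)\neq0$, one can move $u_n(t_n)$ by $o(1)$ onto $\{Q=Q(\phi_\omega)\}$, and your Step 2 bound then contradicts the distance being $\epsilon$. A direct estimate at fixed $\omega$ also works: $\mathcal{H}(u_0)-\mathcal{H}(\phi_\omega)=O(\delta^2)$ because $\phi_\omega$ is critical, and $\mu=O(\delta+\|v\|^2)$, which yields $\|v\|=O(\delta)$.

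Finally, ``the a priori bound extends the solution globally'' needs a continuation criterion, such as a local existence time depending only on a bound for $\|u_0\|$. Hypothesis (i) as written in the paper does not provide this.
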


The above assumptions provide a sufficient condition for stability.
A necessary-and-sufficient criterion requires stronger hypotheses,
including an additional condition on $\mathcal{J}$ (see \cite[p.~167]{GSS1987}).
In this work, the sufficient condition is adequate for our purposes.

In our case, we have, from equation  \eqref{HF} and from section \ref{HF1} \ref{sec:2.2}
\eq{
\mathcal{J}=\partial_x,\;\;\mathcal{X}=\HT,\;\;\Lambda=1-\partial_x^2,\;\;\omega=-c,
}{spec}
the Hamiltonian symmetry $T$ is defined in \eqref{SymD}, \text{ and }$\phi_0$\text{ is the solution specified by Lemma \ref{lem-trav}}.

\section{Spectral Analysis}\label{sec:stab}

In this section, we analysis the spectrum of the linear operator arising from the second variation of the Lyapunov functional
introduced in Section \ref{Var}
\eq{
{\mathcal{L}}\equiv \left.{\mathcal{H}}''\right|_{u=\phi_0}=(c-\phi_0)-(1-\partial_x^2)^{-1}.
}{sec}

\subsection{Essential Spectrum}
\label{essS}

The essential spectrum is defined \cite[Definition 2.2.3]{KP} to be the values of $\lambda$ such that the operator
 $$
 \LM-\lambda I
 $$
 considered on $L^2(\RM)$  is either not Fredholm or Fredholm with nonzero Fredholm index. As disucused in \cite[proof of Lemma 3.2]{Lafortune2024}
 $$
\lambda\in \sigma_{\text{ess}}(\LM)\iff (1-\partial_{x}^2)(\LM-\lambda I)\text{ is not Fredholm or is Fredholm with nonzero Fredholm index.}
 $$

 From \eqref{sec}, we obtain{
 $$
 (1-\partial_{x}^2)(\LM-\lambda I)=(1-\partial_{x}^2)(c-\phi_0)-1-\lambda(1-\partial_{x}^2).
 $$}
Since the above  is a linear differential operator with asymptotically constant coefficients\footnote{Note here we are also using that the coefficients approach their
 asymptotic value as $x\to\pm\infty$ at exponential rates.}, we can use the classical result of Henry \cite[Theorem A.2]{Henry81} which states that the essential spectrum is found by computing the spectrum of the asymptotic eigenvalue problem\footnote{See also \cite[Theorem 3.1.11]{KP}.}.
 \eqnn{
 (1-\partial_{x}^2)(c-\lambda)v-v=0.
 }
 Being constant coefficient, one can now use Fourier analysis to find that the essential spectrum
consists of all the values of $\lambda\in\CM$ such that
$$
(1+r^2)(c-\lambda)-1=0,
$$
for some $r\in\R$. Solving for $\lambda$, this implies that the essential spectrum consists of the image of the function $\lambda:\RM\to\CM$ defined explicitly by
$$
\lambda(r)=c-\frac{1}{1+r^2}.
$$

\begin{lemma}
\label{ess}
The essential spectrum of $\LM$ is positive, bounded away from the origin and given by the interval
\eqnn{
\sigma_{\text{ess}}(\LM)=\left[c-1,c\right).
}
\end{lemma}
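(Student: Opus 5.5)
Essentially all the work has been done in the computation preceding the statement. By the reduction borrowed from \cite[proof of Lemma 3.2]{Lafortune2024}, $\lambda\in\sigma_{\text{ess}}(\LM)$ exactly when $(1-\partial_x^2)(\LM-\lambda I)$ fails to be Fredholm of index zero. That operator is a second-order differential operator whose coefficients converge exponentially fast to constants as $x\to\pm\infty$, because $\phi_0(x)\to 0$ exponentially. This exponential decay comes from the hyperbolicity of the saddle $(0,0)$ of \eqref{eq:YTF-2.2} in Lemma \ref{lem-trav}. We may therefore apply \cite[Theorem A.2]{Henry81}. Since both ends share the same limiting operator, $(c-\lambda)(1-\partial_x^2)-1$, the essential spectrum is the Fredholm border set, with index zero elsewhere. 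That set is the image of the Fourier symbol curve $\lambda(r)=c-\frac{1}{1+r^2}$ for $r\in\RM$.

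The plan is as follows.

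\textbf{Step 1.} Record the reduction above, checking the exponential decay of $\phi_0$.

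\textbf{Step 2.} Compute the range of $\lambda(r)$. The map $r\mapsto \frac{1}{1+r^2}$ takes values in $(0,1]$: it attains $1$ at $r=0$ and tends to $0$ as $|r|\to\infty$ without reaching it. Hence $\lambda(\RM)=[c-1,c)$, which is real. I would note that $\sigma_{\text{ess}}$ is closed, so strictly speaking the endpoint $c$ belongs to its closure. I would follow the paper's convention of recording the image of the curve, and observe that the endpoint is immaterial for the positivity claim. Alternatively, one could justify directly that $\lambda=c$ lies in $\sigma_{\text{ess}}$ as the closure point.

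\textbf{Step 3.} Establish positivity and the gap from the origin. Lemma \ref{lem-trav} restricts $c$ to $(1,4/3)$, so $c-1>0$. Hence $\sigma_{\text{ess}}(\LM)\subset[c-1,c]$ lies in $(0,\infty)$ and is bounded away from $0$ by $c-1$.

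The only delicate point is Step 1, namely ensuring that the Henry framework, stated for differential operators, applies to the nonlocal $\LM$. This is exactly what multiplying by the invertible operator $1-\partial_x^2$ achieves. Fredholmness and index are preserved because $1-\partial_x^2:H^2\to L^2$ is an isomorphism. I would cite the cited lemma for this equivalence rather than reprove it.
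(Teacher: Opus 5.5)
You reproduce the computation that precedes the statement essentially verbatim, and with it you inherit a genuine gap in Step 1. Exponentially convergent coefficients are not enough for \cite[Theorem A.2]{Henry81} or \cite[Theorem 3.1.11]{KP}. These results assume a nondegenerate principal part, since they pass to the first-order system for $(v,v')$. So the coefficient of $v''$ in $(1-\partial_x^2)(\LM-\lambda I)$, namely $-(c-\phi_0(x)-\lambda)$, must be bounded away from zero. The same nondegeneracy is what lets you transfer Fredholm properties between $\LM-\lambda I$ on $L^2(\RM)$ and the differential operator on $H^2(\RM)$.

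For every $\lambda$ in the range $[c-\phi_{0\text{max}},c)$ of $c-\phi_0$, this coefficient vanishes at some $x_0$, and such $\lambda$ genuinely lie in $\sigma_{\text{ess}}(\LM)$. Take $\chi\in C_c^\infty(\RM)$ with $\|\chi\|_{L^2}=1$ and set $v_n(x)=\sqrt{n}\,\chi(n(x-x_0))$. Then $v_n\rightharpoonup0$, and $\|(c-\phi_0-\lambda)v_n\|_{L^2}\to0$ by continuity of $\phi_0$. Moreover $\|(1-\partial_x^2)^{-1}v_n\|_{L^2}=\|G\ast v_n\|_{L^2}\le\|G\|_{L^2}\|v_n\|_{L^1}=O(n^{-1/2})$, so $(v_n)$ is a singular Weyl sequence. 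Consequently
\[
\sigma_{\text{ess}}(\LM)=[c-1,c]\cup[c-\phi_{0\text{max}},c].
\]
This is strictly larger than $[c-1,c]$ once $\phi_{0\text{max}}>1$, i.e.\ for $c\in(1+\sqrt3/6,\,4/3)$, which overlaps the range $(1,c_0)$ of Theorem~\ref{main}. So the equality in the statement fails there, as does your inclusion $\sigma_{\text{ess}}(\LM)\subset[c-1,c]$ in Step 3. Your remark about the endpoint $c$ is correct, but it is not the only defect.

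What the stability argument actually needs does survive. Since $c-\phi_{0\text{max}}=\frac43-c+\frac23\sqrt{4-3c}>0$, the essential spectrum is still positive and bounded away from $0$, by $\min\{c-1,\,c-\phi_{0\text{max}}\}$ rather than by $c-1$.

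A clean proof of the reverse inclusion avoids Henry altogether. For $\lambda\notin[c-1,c]$, the Fourier multiplier $B_\lambda\equiv(c-\lambda)-(1-\partial_x^2)^{-1}$ is invertible. Its inverse is $B_\lambda^{-1}=\frac{1}{c-\lambda}+R_\lambda$, where $R_\lambda$ has symbol $\bigl[(c-\lambda)\bigl((c-\lambda)(1+\xi^2)-1\bigr)\bigr]^{-1}=O\bigl((1+\xi^2)^{-1}\bigr)$. Then
\[
\LM-\lambda I=B_\lambda\Bigl(\tfrac{c-\lambda-\phi_0}{c-\lambda}-R_\lambda\phi_0\Bigr),
\]
and $R_\lambda\phi_0$ is compact, being a decaying symbol composed with a decaying multiplier. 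Hence $\LM-\lambda I$ is Fredholm of index zero if and only if $c-\phi_0-\lambda$ never vanishes. Combining this with Weyl sequences escaping to infinity for $\lambda\in[c-1,c]$, and with the concentrating sequences above, gives the displayed formula.
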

Note that from Lemma \ref{lem-trav}, we have that $c\in(1,4/3)$, thus $c-1>0$.

 \subsection{Point Spectrum}

 We first present a result about the whole spectrum of ${\mathcal{L}}$.
\begin{lemma}
\label{Lemspec}
The spectrum of the operator
\eq{
{\mathcal{L}}=(c-\phi_0)-(1-\partial_x^2)^{-1}.
}{sec2}
acting on $L^2(\RM)$ is real and satisfies
\begin{equation}\label{Sspec}
\sigma\left(\mathcal{L}\right)\subset \left[\lambda_0,\,c\right],\;\;\;\lambda_0\equiv c-1-\phi_{0\text{max}}
\end{equation}
where $\phi_{0\text{max}}=\sup_{x\in \mathbb{R}}\left(\phi_0\right)$ is specified in \eqref{phite}.
\end{lemma}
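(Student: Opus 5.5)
The plan is to use that $\mathcal{L}$ is a bounded self-adjoint operator on $L^2(\mathbb{R})$ and to bound its quadratic form from above and below. Write $\mathcal{L}=M-K$, where $M$ is multiplication by $c-\phi_0$ and $K=(1-\partial_x^2)^{-1}$.

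\textbf{Boundedness and self-adjointness.} By Lemma~\ref{lem-trav}, $\phi_0$ is smooth and bounded with $c-4/3<\phi_0<c$, so $M$ is a bounded real multiplication operator. Since $K$ has the real even kernel $G(x)=\tfrac12 e^{-|x|}$ of \eqref{Hel}, it is bounded and self-adjoint. Hence $\mathcal{L}$ is bounded and self-adjoint on $L^2(\mathbb{R})$, and its spectrum is real. It is then contained in $[m,M_+]$, where $m=\inf_{\|v\|=1}(\mathcal{L}v,v)$ and $M_+=\sup_{\|v\|=1}(\mathcal{L}v,v)$ (numerical range of a self-adjoint operator). So it suffices to show
\[
\lambda_0\|v\|^2\le(\mathcal{L}v,v)\le c\|v\|^2
\qquad\text{for all } v\in L^2(\mathbb{R}).
\]

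\textbf{The two Fourier bounds.} By Plancherel, as in Section~\ref{HF1},
\[
(Kv,v)=\int_{\mathbb{R}}\frac{|\hat v(\xi)|^2}{1+\xi^2}\,d\xi\in\bigl[0,\|v\|^2\bigr].
\]
So $K$ is a nonnegative operator with norm at most $1$.

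\textbf{Upper bound.} Write
\[
(\mathcal{L}v,v)=\int_{\mathbb{R}}(c-\phi_0)v^2\,dx-(Kv,v).
\]
Dropping the nonnegative term $(Kv,v)$ gives $(\mathcal{L}v,v)\le\int(c-\phi_0)v^2\,dx$. To conclude $(\mathcal{L}v,v)\le c\|v\|^2$, I need $c-\phi_0\le c$, that is, $\phi_0\ge0$.

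This is the one delicate point. Lemma~\ref{lem-trav} only gives $\phi_0>c-4/3$, which is negative. So I would establish positivity of the profile from the phase-plane picture. The homoclinic orbit leaves the saddle $(0,0)$ and stays in $\phi\in(0,c)$: the potential $F$ has its local maximum at $k=0$ and its local minimum at $\phi_2>c-1>0$, so the orbit lies between the saddle and the turning point $\phi_{0\text{max}}$. Since $0<\phi_2<\phi_{0\text{max}}$, we get $0\le\phi_0\le\phi_{0\text{max}}$.

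If the positivity argument turns out to be troublesome, there is a fallback. Since $\phi_0\to0$ at infinity, $\inf\phi_0=0$ anyway, so one could also argue with Weyl sequences. But the direct bound via $\phi_0\ge0$ is cleaner.

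\textbf{Lower bound.} Using $(Kv,v)\le\|v\|^2$ and $c-\phi_0\ge c-\phi_{0\text{max}}$, with $\phi_{0\text{max}}=\sup\phi_0$ given in Lemma~\ref{lem-trav} and \eqref{phite}, we get
\[
(\mathcal{L}v,v)\ge(c-\phi_{0\text{max}})\|v\|^2-\|v\|^2=\lambda_0\|v\|^2.
\]
Combining the upper and lower bounds gives $\sigma(\mathcal{L})\subset[\lambda_0,c]$. This is also consistent with Lemma~\ref{ess}.

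The main obstacle is justifying $\phi_0\ge0$, i.e.\ that the zero-background wave is an elevation wave. Everything else is a routine quadratic-form estimate.
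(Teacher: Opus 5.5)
Your proof is correct and follows the same route as the paper. The paper writes $\mathcal{L}$ as multiplication by $c-\phi_0$ (spectrum $[c-\phi_{0\text{max}},c]$) minus $(1-\partial_x^2)^{-1}$ (spectrum $[0,1]$) and subtracts the two intervals; your quadratic-form bounds supply the numerical-range justification for that subtraction, and you correctly make explicit the positivity $\phi_0\ge 0$ (saddle at $0$, center at $\phi_2>c-1>0$), which the paper uses tacitly when it asserts $\sup(c-\phi_0)=c$. Drop the fallback remark, though: decay of $\phi_0$ only gives $\inf\phi_0\le 0$, not $\inf\phi_0=0$, so it cannot replace the positivity argument for the upper bound.
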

Note that the interval in \eqref{Sspec} has a nonzero intersection with the negative real axis since $\lambda_0=c-1-\phi_{0\text{max}}<0$.
\begin{proof}
The supremum
value of $c-\phi_0$ is $c$, while its minimum value is attained at $x=0$ and is given by $c-\phi_{0\text{max}}$. As a consequence, the spectrum of the multiplication operation by $c-\phi_0$ consists
of its range given by the interval $\left[c-\phi_{0\text{max}},\,c\right]$.

Further, the operator $(1-\partial_x^2)^{-1}$ can be written as a convolution as in \eqref{Hel} as a convolution by the Green function $G(x)=e^{-|x|}/2$.
 Taking the Fourier transform of this operator
transforms convolution into multiplication by $\widehat{G}$. Now, for the multiplication operator, the spectrum is the range of the Fourier transform $\widehat{G}$.
Since the Fourier transform is unitary, we get that spectrum of $(1-\partial_x^2)^{-1}$
 is also the range of $\widehat{G}$, which can be checked to be $[0,1]$.

 Given the form of the self-adjoint operator $\mathcal{L}$ written as a difference between two
 self-adjoint operators whose spectrum has been determined above, the spectrum of $\mathcal{L}$ lies in the difference of the two intervals as given in \eqref{Sspec}.
In other words,  the  lemma is then proven using  the fact that $\mathcal{L}$ given in \eqref{sec2} is the difference between an operator whose spectrum is $\left[c-\phi_{0\text{max}},\,c\right]$ and  one with spectrum $[0,1]$.
 \end{proof}

 \begin{lemma}
 \label{point}
The spectrum of the operator $\mathcal{L}$  given in \eqref{sec} on $L^2(\mathbb{R})$ is real. Its kernel is  one-dimensional and spanned by $\phi_0'$, with $\phi_0$ specified by Lemma \ref{lem-trav}. The point spectrum contains only one negative eigenvalue.
\end{lemma}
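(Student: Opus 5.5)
The plan is to turn the nonlocal eigenvalue problem for $\mathcal{L}$ below the essential spectrum into a local Sturm--Liouville problem, and then count negative eigenvalues by Sturm oscillation theory.

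Reality of the spectrum is immediate, since $\mathcal{L}$ is bounded and self-adjoint on $L^2(\mathbb{R})$: it is a real multiplication operator minus the self-adjoint convolution by $G$. By Lemma \ref{ess}, the spectrum in $(-\infty,c-1)$ therefore consists of isolated eigenvalues of finite multiplicity. By Lemma \ref{Lemspec}, these eigenvalues lie in $[\lambda_0,c-1)$.

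\textbf{Kernel.} Differentiate the critical point relation \eqref{crit}, i.e. $c\phi_0-\tfrac12\phi_0^2-(1-\partial_x^2)^{-1}\phi_0=0$, with respect to $x$. This gives $(c-\phi_0)\phi_0'-(1-\partial_x^2)^{-1}\phi_0'=0$, so $\mathcal{L}\phi_0'=0$.

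\textbf{Reduction to a local problem.} Let $\lambda<c-1$ and suppose $\mathcal{L}v=\lambda v$. Set $w=(1-\partial_x^2)^{-1}v\in H^2$. Then $(c-\phi_0-\lambda)v=w$, and since $c-\phi_0-\lambda\ge c-\phi_{0\max}-\lambda>0$ whenever $\lambda<c-\phi_{0\max}$, we can write $v=w/(c-\phi_0-\lambda)$. The eigenvalue problem becomes
\[
(1-\partial_x^2)\Big(\tfrac{w}{c-\phi_0-\lambda}\Big)=w .
\]
A subtlety is that eigenvalues might lie in $[c-\phi_{0\max},c-1)$, where this coefficient can vanish. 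To handle this, I would instead use a min--max/Birman--Schwinger formulation. Write $p=c-\phi_0>0$ and $v=p^{-1/2}f$. Then $\mathcal{L}$ is congruent to $p^{1/2}(I-K)p^{1/2}$ with $K=p^{-1/2}(1-\partial_x^2)^{-1}p^{-1/2}$. The number of negative eigenvalues of $\mathcal{L}$ equals the number of eigenvalues of $K$ greater than $1$, counted with multiplicity, by Sylvester inertia. Similarly, $\dim\ker\mathcal{L}$ equals the multiplicity of the eigenvalue $1$ of $K$.

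The equation $Kf=\mu f$ is equivalent to $g=(1-\partial_x^2)^{-1}p^{-1/2}f$ satisfying the local problem
\[
-g''+g=\mu^{-1}p^{-1}g ,
\]
that is, a Schr\"odinger-type problem
\[
-g''+\Big(1-\frac{1}{\mu\,(c-\phi_0)}\Big)g=0 .
\]
For $\mu=1$ the solution $g=(1-\partial_x^2)^{-1}\phi_0'=(c-\phi_0)\phi_0'$ decays at infinity and has exactly one zero, at $x=0$, because $\phi_0$ is even and monotone on each half-line. Consider the family of operators $A_\mu=-\partial_x^2+1-\mu^{-1}(c-\phi_0)^{-1}$. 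These are monotone in $\mu$, and $A_1$ has $0$ as an eigenvalue with eigenfunction having one node. By Sturm--Liouville oscillation theory, $0$ is then the second eigenvalue of $A_1$, so $A_1$ has exactly one negative eigenvalue. The eigenvalues of $K$ exceeding $1$ correspond to values $\mu>1$ at which $A_\mu$ has a zero eigenvalue. Each eigenvalue branch of $A_\mu$ increases strictly as $\mu$ increases past $1$, so only the ground-state branch can cross zero for $\mu>1$, and it does so exactly once. Hence $K$ has exactly one eigenvalue above $1$, and the eigenvalue $1$ is simple, since one-dimensional $L^2$ solution spaces of second-order ODEs are simple. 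Therefore $\mathcal{L}$ has exactly one negative eigenvalue and $\ker\mathcal{L}=\mathrm{span}\{\phi_0'\}$.

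\textbf{Main obstacle.} The delicate step is making the congruence rigorous. We need Sylvester's inertia law in infinite dimensions, valid here since $p^{\pm1/2}$ are bounded, and the correct handling of the essential spectrum of $K$, namely that it lies below $1$ because $c-1>0$. We also need the monotone crossing argument for $A_\mu$, which requires checking that the ground state does cross zero for some finite $\mu>1$. As $\mu\to\infty$, $A_\mu\to-\partial_x^2+1>0$, so the ground-state branch must cross. Since each branch is monotone and $A_1$ already has exactly one negative eigenvalue, the crossing count above $1$ is exactly one.
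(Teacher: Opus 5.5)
Your argument is correct, but it takes a different route from the paper.

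\textbf{The paper's route.} The paper works with the spectral parameter $\lambda$ of $\mathcal{L}$ itself. It rewrites $\mathcal{L}v=\lambda v$ as $p_{xx}-A(x,\lambda)p=0$, where $p=(1-\partial_x^2)^{-1}v$ and $A=1-1/(c-\phi_0-\lambda)$; this is legitimate for $\lambda\le 0$. It then introduces a Pr\"ufer angle $\theta$ and argues that $\theta^+(0,\lambda)$ is decreasing on $[\lambda_0,0]$. Finally it pins the endpoint values $\theta^+(0,\lambda_0)\ge 0$ and $\theta^+(0,0)=-\pi/2$. The second of these comes from an explicit convolution estimate showing $(1-\partial_x^2)^{-1}\phi_0'<0$ for $x>0$.

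\textbf{Your route.} You factor $\mathcal{L}=(c-\phi_0)^{1/2}(I-K)(c-\phi_0)^{1/2}$ and use Sylvester's inertia law. This trades the negative index and kernel of $\mathcal{L}$ for the eigenvalues of $K$ lying above or at $1$. You then count those by a coupling-constant (Birman--Schwinger) argument on the Schr\"odinger family $A_\mu$.

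\textbf{What your route buys.} The parameter $\mu$ enters linearly and with a sign, so monotonicity of the eigenvalue branches follows at once from min--max. The node count is then the textbook oscillation theorem. By contrast, the paper's formula for $\partial_\lambda\theta^+(0,\lambda)$ drops the terms coming from the $\lambda$-dependence of $\theta$ inside the integrand, and would need a variational-equation argument to be made rigorous. Your count is also of the dimension of the maximal negative subspace. That is exactly the quantity the remark in Section~\ref{note} needs in order to pass from $\mathcal{L}$ to $\Lambda^{-1}\mathcal{L}$.

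\textbf{A simplification usable either way.} You observe that $\mathcal{L}\phi_0'=0$ directly gives $(1-\partial_x^2)^{-1}\phi_0'=(c-\phi_0)\phi_0'$. This makes the single-node property immediate and could replace the paper's integral estimate even within its own shooting argument.

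\textbf{What the paper's route buys.} It localizes the negative eigenvalue in $(\lambda_0,0)$ and gives a concrete shooting picture. Your argument only counts that eigenvalue, which is all the Grillakis--Shatah--Strauss hypothesis requires.
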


 \begin{proof}
The spectrum of ${\mathcal{L}}$ is real on account of the fact that ${\mathcal{L}}$ is self-adjoint.

We consider the eigenvalue problem for
\eq{
{\mathcal{L}} v = \lambda v,
\qquad \lambda \in [\lambda_0,0], \quad v \in L^2(\mathbb{R}),
}{eig}
with ${\mathcal{L}}$ is given in \eqref{sec}. Note that in \eqref{eig}, we chose the lower bound of the spectrum
to be as it is because of the result of Lemma \ref{Lemspec} giving a lower and upper bound for $\sigma({\mathcal{L}})$ in \eqref{Sspec}, with $\lambda_0=c-1-\phi_{0\text{max}}<0$. Thus, if we are studying non-positive eigenvalue as we are doing here, the interval given above in \eqref{eig} is the largest we need to consider.

Introducing the notation
\[
p \equiv  (1-\partial_x^2)^{-1} v,
\qquad
A(x,\lambda) \equiv  \frac{c-1-\phi_0-\lambda}{c-\phi_0-\lambda}=1- \frac{1}{c-\phi_0-\lambda},
\]
the eigenvalue problem \eqref{eig} is equivalent to
\eq{
L_\lambda p \equiv p_{xx} - A(x,\lambda)\,p = 0,
}{3.2}
Note that the problem above is well defined for $\lambda<0$ since $c-\phi_0>0$ from Lemma \ref{lem-trav}.

It is straightforward to check that $v=\phi_0'$ is a solution to \eqref{eig} for $\lambda=0$ using the traveling wave equation \eqref{eq:YTF-2.1} satisfied by $\phi_0$.
It then follows from that fact that $p=(1-\partial_x^2)^{-1}\phi_0'$ satisfies \eqref{3.2} with the the inverse of the Helmotz operator  defined as in \eqref{Hel}. Since \eqref{3.2} is of second order and its asymptotic states as $x\to\pm\infty$ are the same, it can only have one solution in $L^2(\mathbb{R})$ for any value of $\lambda$.  Since \eqref{eig} and \eqref{3.2} are equivalent, we have proven that the kernel of ${\mathcal{L}}$ is one-dimensional and spanned by $\phi_0'$.

 We now consider the statement about the negative spectrum. To prove it, we reproduce and adapt much of the argument used by the authors of \cite{Li2020DP} for their Theorem 3.1. That theorem is about the second variation of the Lyapunov functional associated to the smooth solitary wave solution to the Degasperis-Procesi Equation. The obtained operator has a very similar structure as the one we have here in \eqref{sec}. Our task is made easier by Lemma \ref{Lemspec} giving us a lower bound on the spectrum.

 First, note that
$A(x,\lambda)$ in \eqref{3.2} is even in $x$ and strictly increasing on the interval
$x\in[0,\infty)$. Moreover, since $\phi_0$ decays exponentially to $0$ as $x\to\pm\infty$  and $\lambda\le 0$, $A(x,\lambda)$ is positive
for negative values of $\lambda$ and large enough $|x|$. It follows that there always
exist unique solutions $p^-(x,\lambda)$ and $p^+(x,\lambda)$, up to multiplication by
a constant, which approach $0$ as $x\to\pm\infty$ respectively. In fact,
$p^\mp(x,\lambda)$ is asymptotic to
\[
p^\mp_\infty(x,\lambda)\equiv e^{\mp\sqrt{A^\infty(\lambda)}\,x},
\qquad
A^\infty(\lambda)\equiv \lim_{x\to\pm\infty}A(x,\lambda)
= 1-\frac{1}{c-\lambda},
\]
that is,{
\[
p^\mp(x,\lambda)\sim e^{\mp\sqrt{1-\frac{1}{c-\lambda}}\;x},
\qquad \text{as }x\to\pm\infty.
\]}
Note that $A^\infty>0$ from the fact that $c>1$ as specified in Lemma \ref{lem-trav}.

Using a similar scheme as what is done to define the Evans function,  $\lambda$ is an eigenvalue of
\eqref{3.2} if and only if the two vectors
\[
\bigl(p^-(0,\lambda),\partial_x p^-(0,\lambda)\bigr)
\quad\text{and}\quad
\bigl(p^+(0,\lambda),\partial_x p^+(0,\lambda)\bigr)
\]
are parallel to each other. Using polar coordinate
\[
p=\rho\cos\theta,\qquad p_x=\rho\sin\theta,
\]
equation \eqref{3.2} becomes
\begin{equation}
\theta_x=A(x,\lambda)\cos^2(\theta)-\sin^2(\theta),
\label{3.3}
\end{equation}
and an equation for $\rho$, coupled to $\theta$, which we omit.

An eigenvalue $\lambda_{e}$ is such that there is a solution
$\theta_{e}(x,\lambda_{e})$ of \eqref{3.3} which approaches{
\[
\theta_{-\infty}(\lambda)
\equiv \arctan\!\left(
\lim_{x\to-\infty}\!\frac{\partial_x p^+(x,\lambda)}{p^+(x,\lambda)}
\right)
=\arctan\!\left(\sqrt{A^\infty(\lambda)}\right),
\quad \text{as }x\to-\infty,
\]
and approaches
\[
\theta^{(m)}_{+\infty}(\lambda)
\equiv \arctan\!\left(
\lim_{x\to+\infty}\frac{ \partial_xp^-(x,\lambda)}{p^-(x,\lambda)}
\right)+m\pi
= m\pi-\arctan\!\left(\sqrt{A^\infty(\lambda)}\right),
\quad m\in\mathbb{Z},
\]
as $x\to+\infty$.} We chose to fix
$\theta_{-\infty}$ independent of $m$ while $\theta^{(m)}_{+\infty}$ depends on $m$.

It is observed that the coefficient $A(x,\lambda)$, as a function of $x$
 for any fixed $\lambda\in[c-1-\phi_{0\text{max}},0]$, there exists
$\bar{x}(\lambda)\ge 0$ such that
\eq{
A(x,\lambda)> 0 \text{ for }|x|>\bar{x}(\lambda),
\qquad
A(x,\lambda)\le 0 \text{ for }|x|\le\bar{x}(\lambda).
}{xh}
A consequence of the facts that $\phi_0\to 0$ as $|x|\to\pm\infty$, that $c-1>0$, and that $\lambda_0=c-1-\phi_{0\text{max}}<0$.

For $\lambda\in[\lambda_0,0]$, there are exactly two eigenvalues:
$\lambda=0$ and $\lambda=\lambda_* \in(c-1-\phi_{0\text{max}},0)$.

To prove this, we define $\theta^+(x,\lambda)$ to be the solution to \eqref{3.3} such that $\theta^+(-\infty,\lambda)=\theta_{-\infty}(\lambda)$ and
$\theta^-(x,\lambda)$ to be the solution to \eqref{3.3} such that $\theta^-(\infty,\lambda)=\theta^{(m)}_{\infty}(\lambda)$. We thus have the following statement
$$
\lambda\text{ is an eigenvalue if and only if } \theta^+(0,\lambda)=\theta^-(0,\lambda),
$$
which, thanks to the evenness of $A(x, \lambda)$ with respect {to $x$}, is equivalent to the following statement.
\eq{
\lambda\text{ is an eigenvalue if and only if } \theta^+(0,\lambda)=-\frac{m\pi}{2},\text{ for some }m\in\mathbb{N}.
}{eigst}
We compute that for $\lambda\leq 0$, we have that{
$$
\partial_\lambda A(x,\lambda)=-\frac{1}{(c-\phi_0-\lambda)^2}<0,\;\;
\partial_\lambda\theta_{-\infty} (\lambda)=\frac{\partial_\lambda A(\infty,\lambda)}{2\sqrt{A(\infty,\lambda)}(1+A(\infty,\lambda))}<0.
$$}
We conclude that{
\begin{align*}
\partial_\lambda \theta^{+}(0,\lambda)
&= \partial_\lambda \left(\theta^{+}(0,\lambda)
   - \theta_{-\infty} (\lambda)\right)
   + \partial_\lambda \theta_{-\infty} (\lambda) \\
&= \partial_\lambda
   \left(
      \int_{-\infty}^{0}
      \bigl( A(x,\lambda)\cos^2\theta - \sin^2\theta \bigr)\, dx
   \right)
   + \partial_\lambda \theta_{-\infty} (\lambda) \\
&= \int_{-\infty}^{0}
   \bigl( \partial_\lambda A(x,\lambda) \bigr)\cos^2\theta \, dx
   + \partial_\lambda \theta_{-\infty} (\lambda) < 0 .
\end{align*}}

That is, $\theta^{+}(0,\lambda)$ is a strictly decreasing function
of $\lambda$ for $\lambda<0$.

In addition, we claim that
\begin{equation}
\theta^{+}(0,\lambda_0) \geq 0,
\qquad
\theta^{+}(0,0) = -\frac{\pi}{2},\qquad\lambda_0= c-1-\phi_{0\text{max}}.
\label{3.4}
\end{equation}

As a result of the monotonicity and boundary conditions, there exists a unique
$\lambda_* \in [\lambda_0,0)$ such that
\[
\theta^{+}(0,\lambda_*)  = 0;
\]
that is, $\lambda_*$ is the only eigenvalue in the interval $[\lambda_0,0)$.
We are now left to prove that \eqref{3.4} holds.

\medskip

\noindent{\bf{Proof that}} ${\bf{\theta^{+}(0,\lambda_0) \geq  0}}$.
On the interval  $(-\infty,-\bar{x}(\lambda))$, with $\bar{x}(\lambda)$ defined in \eqref{xh},
 we have $A(x,\lambda)> 0$ and thus the interval $[0,\pi/2]$ is invariant for \eqref{3.3}
 on $(-\infty,-\bar{x}(\lambda)]${. Indeed, take $\theta=0$ and $\theta=\pi/2$ in \eqref{3.3} respectively,
  then $\theta_x|_{\theta=0}>0$ and $\theta_x|_{\theta=\pi/2}<0$ due to $A(x,\lambda)> 0$}. Since $\theta_{-\infty}(\lambda)\in (0,\pi/2)$ and since $\theta^+$ approaches  $\theta_{-\infty}(\lambda)$ as $x\to-\infty$, we have  that $\theta^+(\bar{x}(\lambda),\lambda)\geq 0$. Since $\bar{x}(\lambda_0)=0$, we have  $\theta^{+}(0,\lambda_0) \geq 0$.

{\noindent}{\bf{Proof that ${\bf{\theta^+(0,0) = -\frac{\bf\pi}{2}.}}$}}
Note that $\lambda = 0$ is proved to be an eigenvalue of \eqref{3.2} with eigenfunction{
\[
p_0 \equiv (1- \partial_x^2)^{-1}{\phi_0'(x).}
\]}

To show that $\theta^+(0,0) = -\pi/2$, we only need to prove that
$p_0(x)$ has exactly one zero on $(-\infty,\infty)$, and this zero is at the origin. Indeed, as mentionned before, $\theta^+(x,\lambda)\in(0,\pi/2)$ for $x<-\bar{x}(\lambda)$. For $x\in \left(-\bar{x}(\lambda),\,\bar{x}(\lambda)\right)$, $A(x,\lambda)<0$ and it follows from \eqref{3.3} that $\theta_x<0$. Furthermore, since $\lambda=0$ is an eigenvalue, it follows from \eqref{eigst} that $\theta^+(0,0)=-m\pi/2\text{ for some }m\in\mathbb{N}$. Thus if $p_0(x)$ has only one zero, and that zero is at the origin, then we must have $m=1$.

To prove that $p_0$ has only one zero, which is located at the origin, we note that $p_0(x)$ is an odd function since $\phi_0(x)$ is even.
Therefore, it suffices to show
\begin{equation*}
p_0(x) < 0,
\qquad \text{for all }x > 0.
\end{equation*}

{Using \eqref{Hel} to compute $p_0$, we find
\[
p_0(x)
=
\frac{1}{2}
\left(
\int_{-\infty}^{x} e^{-(x-s)}\phi_0'(s)\,ds
+
\int_{x}^{+\infty} e^{(x-s)}\phi_0'(s)\,ds
\right).
\]

Integrating by parts and change variable yields that, for any $x>0$,
\begin{align*}
p_0(x)
&=
\frac{1}{2}
\left(
\int_{x}^{+\infty} e^{(x-s)}\phi_0(s)\,ds
-
\int_{-\infty}^{x} e^{-(x-s)}\phi_0(s)\,ds
\right) \\
&=
\frac{1}{2}
\left(
\int_{x}^{+\infty} (e^{x}-e^{-x})e^{-s}\phi_0(s)\,ds
-
\int_{-x}^{x} e^{-(x-s)}\phi_0(s)\,ds
\right) \\
&<
\frac{1}{2}
\left(
\int_{x}^{+\infty} (e^{x}-e^{-x})e^{-s}\phi_0(x)\,ds
-
\int_{-x}^{x} e^{-(x-s)}\phi_0(x)\,ds
\right) \\
&=
\frac{1}{2}\phi_0(x)
\left(
\int_{x}^{+\infty} (e^{x}-e^{-x})e^{-s}\,ds
-
\int_{-x}^{x} e^{-(x-s)}\,ds
\right) \\
&= 0,
\end{align*}
which concludes the proof
of the lemma.}
\end{proof}

\subsection{Note on the spectral properties}
\label{note}

We verified the spectral assumptions directly for the operator $\mathcal{L}$.
However, according to the definition \eqref{specD}, the appropriate object is
the generalized eigenvalue problem
\[
\mathcal{L}v=\lambda \Lambda v,
\]
where, in the case of $\HT$, the isomorphism $\Lambda:\HT\to\HT^*$ satisfying
\eqref{LD} is given by $\Lambda=1-\partial_x^2$.
This corresponds to the operator denoted by $I$ in \cite[Eq.~(3.1)]{GSS1987}.
Strictly speaking, one should therefore consider the spectrum of
\[
\widetilde{\mathcal{L}}\equiv \Lambda^{-1}\mathcal{L}:\HT\to\HT.
\]

From the discussion at the beginning of Section~\ref{essS}, together with the
fact that $\Lambda$ is positive definite and invertible, it follows that the
conclusion of Lemma~\ref{ess}, namely that the essential spectrum of $\mathcal{L}$
is positive and bounded away from zero carries over unchanged to
$\widetilde{\mathcal{L}}$.

Concerning the negative spectrum, note that $\widetilde{\mathcal{L}}$ is
self-adjoint with respect to the $\HT$ inner product $(\cdot,\cdot)$.
Indeed, using \eqref{LD} and the self-adjointness of $\mathcal{L}$ with respect
to the duality pairing $\langle\cdot,\cdot\rangle$, we compute
\[
(\widetilde{\mathcal{L}}v_1,v_2)
=\langle \mathcal{L}v_1,v_2\rangle
=\langle v_1,\mathcal{L}v_2\rangle
=(v_1,\widetilde{\mathcal{L}}v_2).
\]
Consequently, the dimension of the maximal subspace on which
$(\widetilde{\mathcal{L}}v,v)<0$ coincides with the dimension of the negative
eigenspace of $\widetilde{\mathcal{L}}$.
Since
\[
(\widetilde{\mathcal{L}}v,v)=\langle \mathcal{L}v,v\rangle,
\]
this dimension is also equal to that of the maximal subspace on which
$\langle \mathcal{L}v,v\rangle<0$.
Thus, although $\mathcal{L}$ and $\widetilde{\mathcal{L}}$ may have different
eigenvalues, their negative subspaces have the same dimension.

In practice, and in particular in the original works of Grillakis, Shatah, and
Strauss \cite{GSS1987,GSS1990}, this distinction is usually suppressed, and the
spectral assumptions are verified directly for $\mathcal{L}$, with the implicit
understanding that the corresponding properties hold for
$\widetilde{\mathcal{L}}$ as well.

\section{Orbital Stability}

From the discussion of the work of \cite{GSS1987} made in Section \ref{3.1}, the solution is orbitally stable if the scalar function
{$$
d(c)\equiv E(\phi_0)+cQ(\phi_0)
$$}
with $\phi_0$ being the solution from Lemma \ref{lem-trav},
is convex, that is
$$
d''(c)>0.
$$
We compute, using the fact that
$$
E'(\phi_0)+c\,Q'(\phi_0)=0,
$$
we find
\begin{equation}\label{eq5.1}
d'(c)=Q(\phi_0)=\int_\mathbb{R}\phi_0^2\,dx.
\end{equation}

{In order to derive the explicit expression of $Q(\phi_0)$, we take advantage
of \eqref{eq:YTF-2.3}, which, for $k=0$, reads
$$
\frac{1}{2}(c-\phi_0)^2\left(\frac{d\phi_0}{dx}\right)^2=\frac{1}{8}\phi_0^2\left[\phi_0^2-\left(4c-\frac{8}{3}\right)\phi_0+4c(c-1)\right]=\frac{1}{8}\phi_0^2(\phi_0-\phi_-)(\phi_0-\phi_+),
$$
where
$$\phi_{\pm}=2c-\frac{4}{3}\pm\frac{2\sqrt{4-3c}}{3}$$
are the two positive roots of the quadratic polynomial
$$P(\phi)\equiv\phi_0^2-\left(4c-\frac{8}{3}\right)\phi_0+4c(c-1).$$ Note that for $x\in(-\infty,0)$, $\frac{d\phi_0}{dx}>0$, so we have
$$
\phi_0=\frac{2(c-\phi_0)}{\sqrt{(\phi_0-\phi_-)(\phi_0-\phi_+)}}\frac{d\phi_0}{dx},\ x\in(-\infty,0).
$$
Substituting this expression into \eqref{eq5.1} and taking advantage of the evenness of $\phi_0$, we have
\eq{
Q(\phi_0)&=2\int_{-\infty}^{0}\phi_0^2\,dx \\
&=4\int_{0}^{\phi_-}\frac{\phi_0(c-\phi_0)}{\sqrt{(\phi_0-\phi_-)(\phi_0-\phi_+)}}d\phi_0. \\
}{dprime}
It should be noted that due to the presence of a singularity in the denominator, one cannot take derivative with respect to $c$ directly. Instead, setting
$$
z\equiv\sqrt{(\phi_0-\phi_-)(\phi_0-\phi_+)},\quad \alpha_{\pm}\equiv \frac{\phi_+\pm \phi_-}{2},\quad \gamma\equiv \phi_+\phi_-,
$$
and noting that
$$
d\phi_0=-\frac{zdz}{\alpha_+-\phi_0}, \quad \alpha_+-\phi_0=\sqrt{z^2+\alpha_-^2},\quad \alpha_+=2c-\frac{4}{3},\quad \alpha_-=\frac{2}{3}\sqrt{4-3c},\quad \gamma=4c(c-1).
$$
Note that $\alpha_+-\phi_0>0$ as a consequence of equation \eqref{phite} giving the maximum value of $\phi_0$.
Then \eqref{dprime} becomes
\eqnn{
Q(\phi_0)&=-4\int_{0}^{\sqrt{\gamma}}\frac{[(\alpha_+-\phi_0)-\alpha_+][(\alpha_+-\phi_0)-(\alpha_+-c)]}{\alpha_+-\phi_0}dz \\
&=-4\int_{0}^{\sqrt{\gamma}}\left\{(\alpha_+-\phi_0)-(2\alpha_+-c)+\frac{\alpha_+(\alpha_+-c)}{\alpha_+-\phi_0 }\right\}dz\\
&=-4\int_{0}^{\sqrt{\gamma}}\left[\sqrt{z^2+\alpha_-^2}-(2\alpha_+-c)+\frac{\alpha_+(\alpha_+-c)}{\sqrt{z^2+\alpha_-^2}}\right]dz\\
&=-4\left[\frac{z\sqrt{z^2+\alpha_-^2}}{2}-(2\alpha_+-c)z+\left(\frac{\alpha_-^2}{2}+\alpha_+^2-c\alpha_+\right)\ln\left(z+\sqrt{z^2+\alpha_-^2}\right) \right]\Bigg|_{z=0}^{z=\sqrt{\gamma}}\\
&=16(c-1)\sqrt{c(c-1)}+\frac{8}{3}(c-1)(4-3c)\ln\left(\frac{3c-2+3\sqrt{c(c-1)}}{\sqrt{4-3c}}\right).
}
A straightforward calculation leads to
\eq{
d''(c)&=\frac{d}{dc}Q(\phi_0) \\
&=(32-\frac{8}{c})\sqrt{c(c-1)}+\frac{8}{3}(c-1)(4-3c)\left[\frac{c-\frac{1}{2}+\sqrt{c(c-1)}}{\sqrt{c(c-1)}(c-\frac{2}{3}+\sqrt{c(c-1)})}+\frac{3}{2(4-3c)}\right]\\
&  +\frac{8}{3}(7-6c)\ln\left(\frac{3c-2+3\sqrt{c(c-1)}}{\sqrt{4-3c}}\right)\\
&=28{\sqrt{c(c - 1)}}+\frac{8}{3}(7-6c)\ln\left(\frac{3c-2+3\sqrt{c(c-1)}}{\sqrt{4-3c}}\right).
}{eq5.4}
It is straightforward to check that the derivative of the argument of the logarithm above is increasing  for $c\in(1,\frac{4}{3})$ by calculating its derivative and it is equal to 1 at $c=1$. Thus the logarithmic expression above is positive and increasing. Hence for $1<c\leq7/6$, the expression above is positive. For $c\in (7/6,4/3)$, the second term in the expression above is negative and decreasing to $-\infty$. Thus there is an interval of values of $c$ in $(7/6,4/3)$  beyond $7/6$ not reaching $4/3$ for which the criterion above is still positive. This value is implicitly determined by the unique zero of the last expression of \eqref{eq5.4} on $(7/6,4/3)$ (which we denote by $c_0$) and the expression in \eqref{eq5.4} is positive for $c\in (1,c_0)$. Numerically, $c_0$ is found to be $c_0\approx 1.3333289$.

Using Lemma \ref{lem-trav} about the existence of traveling smooth solitary waves on a zero background, Lemma \ref{ess} about the essential spectrum being positive and bounded away from zero,  Lemma \ref{point} about the point spectrum, the discussion made in Section \ref{note}, and the fact that the solution is realized as the critical point of the functional specified in \eqref{L}, we have
the following theorem that follows from the general theory of Grillakis, Shatah, and Strauss \cite{GSS1987}
\begin{theorem}\label{main}
Fix $c\in(1,\,c_0)$, where $c_0$ is the unique zero of \eqref{eq5.4} on $(7/6,\,4/3)$ found numerically to be $c_0\approx 1.3333289$. Let $\phi_0$ be the smooth solitary wave solution of the FW equation \eqref{eq:FW}  as  constructed in Lemma \ref{lem-trav}.
Given any $\epsilon>0$ sufficiently small, there exists a constant $C=C(\epsilon)>0$ such that  if {$u \in C([0,t_0); \HT)$} is a solution to \eqref{eq:FW} with
{\[
\|u(\cdot,0) - \phi_0(\cdot)\|_{\HT}
<
C,
\]
then
\[
\sup_{0<t<t_0}\inf_{x_0\in\RM}\left\| u(\cdot,t)-\phi_0(\cdot-x_0)\right\|_{\HT}\leq \epsilon.
\]}
\end{theorem}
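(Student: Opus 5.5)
The plan is to deduce Theorem~\ref{main} from Theorem~\ref{gss}. We check hypotheses (i)--(iii) of Section~\ref{3.1} for the data \eqref{spec}, namely $\mathcal{X}=\HT$, $\mathcal{J}=\partial_x$, $\Lambda=1-\partial_x^2$ and $\omega=-c$. We then use the sign of $d''$ from \eqref{eq5.4}.

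\emph{Hypothesis (i).} Local existence in $\HT$ is not literally available, since the known well-posedness theory requires $s>3/2$. I would therefore phrase the argument as in the statement: only solutions $u\in C([0,t_0);\HT)$ that conserve $E$ and $Q$ are considered. For such solutions, conservation of $E$ and $Q$ follows from the Hamiltonian form \eqref{HF}. For smooth data this is a direct computation, and it extends by density and continuity of $E,Q$ on $\HT$, using the embedding $\HT\hookrightarrow L^\infty$ to control the cubic term. The GSS stability proof uses only conservation on the interval of existence plus the coercivity estimate, so the conclusion holds uniformly on $[0,t_0)$.

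\emph{Hypothesis (ii).} Lemma~\ref{lem-trav} supplies the curve $c\mapsto\phi_0$ for $c\in(1,4/3)$. This curve is smooth in $c$ (the family in Lemma~\ref{lem-travi} is smooth in its parameters, and the symmetry \eqref{sym} preserves this). Each $\phi_0$ decays exponentially with all its derivatives, so $\phi_0\in D(\partial_x^2)$ and $T'(0)\phi_0=\phi_0'\neq0$. The criticality identity \eqref{crit} gives $E'(\phi_0)+cQ'(\phi_0)=0$, which is the required Euler--Lagrange relation with $\omega=-c$.

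\emph{Hypothesis (iii).} Lemma~\ref{ess} places the essential spectrum in $[c-1,c)$, which is positive because $c>1$. Lemma~\ref{point} gives exactly one simple negative eigenvalue and kernel $\mathrm{span}\{\phi_0'\}$. Section~\ref{note} transfers these facts to $\Lambda^{-1}\mathcal{L}$ on $\HT$. The remaining positive spectrum is bounded away from zero, because any point spectrum in $(0,c-1)$ consists of isolated eigenvalues of finite multiplicity, and it can accumulate only at the essential spectrum.

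\emph{Convexity of $d$.} Here $d(\omega)=E(\phi_0)-\omega Q(\phi_0)$ with $\omega=-c$, so $d''(\omega)=d''(c)$ with $d(c)=E(\phi_0)+cQ(\phi_0)$. Formula \eqref{eq5.4} shows $d''(c)>0$ for $c\in(1,c_0)$, so Theorem~\ref{gss} gives orbital stability. Unwinding Definition~\ref{origdef} with $T(s)\phi_0=\phi_0(\cdot+s)$ yields the stated $\epsilon$--$C(\epsilon)$ estimate.

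\emph{Main obstacle.} The step I expect to be most delicate is certifying $d''(c)>0$ rigorously on all of $(1,c_0)$. The positivity for $c\le 7/6$ is clear. On $(7/6,4/3)$ the key claim is that \eqref{eq5.4} has a unique zero. I would try to prove this by showing that the expression is monotone decreasing there, differentiating once more and bounding the derivative's sign. Failing that, I would rely on the numerical root $c_0$, as the statement already does.
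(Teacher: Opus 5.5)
You follow the paper's own route: Lemmas \ref{lem-trav}, \ref{ess} and \ref{point}, then Section \ref{note}, the sign of \eqref{eq5.4}, and finally Theorem \ref{gss}. However, the step where hypothesis (iii) is transferred to $\Lambda^{-1}\mathcal{L}$ on $\HT$ fails. The paper's Section \ref{note} makes the same unjustified claim, so you have inherited this gap rather than introduced it. The number of negative directions and the kernel do transfer, but the spectral gap does not. GSS need that gap to get $\langle\mathcal{L}v,v\rangle\ge\delta\|v\|_{\HT}^2$ on the complement of the negative and null directions. Here $\mathcal{L}=(c-\phi_0)-(1-\partial_x^2)^{-1}$ is of order zero, so $\langle\mathcal{L}v,v\rangle\le c\|v\|_{L^2}^2$, which cannot control $\|v\|_{\HT}^2$. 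Concretely, take $v_n=n^{-1}\chi(x)\cos(nx)$ with $\chi\in C_c^\infty(\mathbb{R})$. Then $v_n\rightharpoonup 0$ in $\HT$ and $\|v_n\|_{\HT}\to\|\chi\|_{L^2}/\sqrt2$, while $\|\Lambda^{-1}\mathcal{L}v_n\|_{\HT}\le\|\mathcal{L}v_n\|_{L^2}\lesssim\|v_n\|_{L^2}\to0$. Hence $0\in\sigma_{\mathrm{ess}}(\Lambda^{-1}\mathcal{L})$; equivalently, the symbol at infinity $(1+r^2)^{-1}\bigl(c-(1+r^2)^{-1}\bigr)$ tends to $0$ as $|r|\to\infty$. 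Your accumulation argument is correct for $\mathcal{L}$ on $L^2$. In $\mathcal{X}=\HT$, though, the relevant essential spectrum reaches $0$, so Theorem \ref{gss} cannot be invoked.

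This is not a technicality that a better write-up removes. One has $\mathcal{H}(\phi_0+v)-\mathcal{H}(\phi_0)=\frac12\langle\mathcal{L}v,v\rangle-\frac16\int v^3\,dx$, and the quadratic part is only $L^2$-coercive (on the usual constrained subspace, using $d''>0$). So the energy method yields at best $L^2$-orbital stability, and only under an a priori smallness bound on $\|v(t)\|_{L^\infty}$. The paper's conclusion explains why such a bound, available for DP, is missing for FW. In fact the $\HT$ statement itself fails. Along characteristics, $\frac{d}{dt}u_x=-u_x^2+u-(1-\partial_x^2)^{-1}u$, and $|u-(1-\partial_x^2)^{-1}u|\le 2\|u\|_{L^\infty}\le 2\|u_0\|_{L^\infty}+t\|u_0\|_{L^2}$. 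Hence smooth data $\phi_0+w^{3/4}\psi(x/w)$ with $\psi\in C_c^\infty$ and $w\to0$ are arbitrarily $\HT$-close to $\phi_0$, yet break within time $O(w^{1/4})$. Let $J=\partial_{x_0}X$ and $V=\partial_{x_0}u(X,t)$, where $J$ is uniformly Lipschitz in $x_0$ and vanishes at breaking. The Burgers-type identity $\|u_x\|_{L^2}^2=\int V^2J^{-1}\,dx_0$ then gives $\|u(t)\|_{\HT}\to\infty$, so the supremum in the statement is infinite when $t_0$ is the breaking time. There are also two smaller points. First, your density argument for conservation of $E$ and $Q$ would need continuous dependence in $\HT$, which is not known; a direct computation suffices, since $u_t\in C([0,t_0);L^2)$. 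Second, monotonicity cannot give uniqueness of the zero of \eqref{eq5.4}, because its derivative at $c=7/6$ is about $42.3-22.1>0$. The expression is, however, concave on $[7/6,4/3)$, and concavity does give uniqueness.
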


A remaining ingredient in our analysis is the local well-posedness of the
perturbed solution (Assumption~(i) in Section~\ref{3.1}). As discussed in the
Introduction, no local well-posedness result is currently available for the FW
equation \eqref{eq:FW} on $\HT$. For this reason, in our theorem we modified
Definition~\ref{origdef} by explicitly assuming the existence of a time
$t_0>0$ up to which the perturbed solution is defined.

As recalled in the Introduction, the FW equation \eqref{eq:FW} is known to be
locally well posed in several Besov spaces, and in particular in
$H^s(\mathbb{R})$ for any $s>3/2$. These results show that, without explicitly
assuming the existence of such a time $t_0$, our theorem can only be applied to
initial data belonging to one of these spaces, for which local existence up to
some time $t_0>0$ has been established.

\section{Conclusion}

In this paper, we have proven Theorem \ref{main} concerning the orbital stability of the smooth solitary wave solution of Lemma \ref{lem-trav}.

Let us add here that also from the paper by Grillakis, Shatah, and Strauss \cite{GSS1987}, we cannot conclude the solutions that correspond to the small interval $c\in(c_0,4/3)$ will be unstable due to the criterion in \eqref{eq5.4} being negative. This is due to the fact that our Hamiltonian operator ${\mathcal{J}}$ defined in \eqref{HF} is not onto, an assumption made in \cite{GSS1987} to prove the instability.

{{It would be of interest to investigate to check for instability in this parameter regime in a future work. One natural approach would be to analyze the spectrum of the linearized FW equation about the solitary wave. This could be initiated by deriving energy--estimate--type bounds to confine any unstable eigenvalues to a bounded region of the complex plane (see, for example, \cite{Humpherys02} for related techniques). One could then compute the Evans function numerically \cite{Evans, Pego92, Sandstede} in order to track the movement of eigenvalues as the wave speed $c$ crosses the critical threshold $c=c_0$, and in particular to identify eigenvalues that transition from the left half--plane into the unstable right half--plane.}}

{As we mention in Section \ref{sec:2.2},  as $c$ goes to $4/3$,
  smooth solitary waves degenerates into waves of peakon form. Since the smooth soliton solution is unstable for $c_0<c<4/3$, we would expect the peakon to be unstable as well. It would thus be interesting to obtain the spectral stability properties of the peakon solution as it was done for example in the case of the $b$-family \cite{LafortunePelinovsky2022,CharalampidisParkerKevrekidisLafortune2023}}.

Due to the symmetry \eqref{sym}, this stability result applies to the nonzero background solutions $\phi_k$ constructed in Lemma \ref{lem-travi}. Given that our stability result Theorem \ref{main} applies only to $c\in (1,\,c_0)$, the result for $\phi_k$ applies to any value of $c$ with $k\in (1-c_0,1)$.

The reason we were constrained to work in the space $\HT$, rather than in
$L^2(\mathbb{R})$, lies in the presence of a cubic term in the energy functional
$E$ defined in \eqref{Energy}. A similar difficulty arises for the
Degasperis--Procesi equation, whose energy also contains a cubic contribution.
In that setting, however, the authors of \cite{Li2023} were able to work in
$L^2(\mathbb{R})$ by controlling the $L^\infty$-norm of the solution in terms of
its $L^2$-norm. This was achieved by imposing additional regularity on the
initial data (see \cite[Propositions~2.2 and~2.3]{Li2023}), following ideas
developed in \cite[Lemma~2]{MolinetDP}. A crucial ingredient in this approach is
the fact that the momentum variable $m=u-u_{xx}$ remains nonnegative for all
times for which the solution exists.

For the whole $b$-family of equations \cite{Holm1,Holm2}, which includes the
Camassa--Holm equation \cite{ch,ch2} and the Degasperis--Procesi equation
\cite{dp,dhh}, this sign preservation property indeed holds. In terms of the
momentum variable
\[
m \equiv u - u_{xx},
\]
the $b$-family can be written as a closed transport--stretching equation
\eq{
m_t + u\,m_x + b\,u_x\,m = 0,
\qquad b=2 \text{ (CH)},\; b=3 \text{ (DP)}.
}{bf}
Let $X(t;x_0)$ denote the characteristic flow generated by the velocity field
$u$, defined by $\dot X(t)=u(X(t),t)$ with $X(0)=x_0$. Along such characteristics,
the momentum satisfies the scalar ordinary differential equation
\[
\frac{d}{dt} m(X(t),t)
= - b\,u_x(X(t),t)\, m(X(t),t),
\]
which admits the explicit solution
\[
m(X(t),t)
= m_0(x_0)\exp\!\left(-b\int_0^t u_x(X(s),s)\,ds\right).
\]
Since the exponential factor is strictly positive as long as the solution
remains smooth, the sign of $m$ is preserved along characteristics. In
particular, if the initial datum satisfies $m_0\ge0$, then $m(\cdot,t)\ge0$ for
as long as the solution exists classically. This invariance of the region
$\{m\ge0\}$ under the flow plays a fundamental role in the global existence and
stability theory for the Camassa--Holm and Degasperis--Procesi equations.

By contrast, for the Fornberg--Whitham equation the corresponding momentum
equation contains an additional forcing term,
\[
m_t + u\,m_x + 3u_x\,m
=
3u\,u_x - u_x,
\qquad m = u - u_{xx},
\]
which destroys the purely multiplicative structure of the evolution along
characteristics and prevents any analogous sign--preservation mechanism.


\section*{Acknowledgements}

{{This research was partially supported by the NSFC (No.12571172)} and the
Scientific Research Fund of Hunan Provincial Education Department
(No.21A0414).
 The research of Z. Liu was supported by the NSFC (No.12571188), and Guangdong Basic and Applied Basic Research
Foundation (No.2024A1515012704).}}

\end{document}